\newtheorem{theorem}{Theorem}
\newtheorem{proposition}{Proposition}
\newtheorem{algorithm}{Algorithm}
\newtheorem{corollary}{Corollary}
\newtheorem{definition}{Definition}
\newtheorem{example}{Example}
\newtheorem{remark}{Remark}
\newtheorem{keywords}{Keywords}
\DeclareMathOperator{\Ima}{im}
\begin{document}

\title{Bisimulation equivalence of DAE systems}

\author{Noorma Yulia Megawati\textsuperscript{a,b}$^{\ast}$
		and Arjan van der Schaft\textsuperscript{a}\\ 
		}
		\thanks{Corresponding author. Email: n.y.megawati@rug.nl; noorma\_yulia@ugm.ac.id\\$^a$Johann Bernoulli Institute for Mathematics and Computer Science,
			University of Groningen, 9700AK Groningen, The Netherlands\\
			$^b$Department of Mathematics, Universitas Gadjah Mada, Yogyakarta, Indonesia}
	
\maketitle

\begin{abstract}
In this paper the notion of bisimulation relation for linear input-state-output systems is extended to general linear differential-algebraic (DAE) systems. Geometric control theory is used to derive a linear-algebraic characterization of bisimulation relations, and an algorithm for computing the maximal bisimulation relation between two linear DAE systems. The general definition is specialized to the case where the matrix pencil $sE - A$ is regular. Furthermore, by developing a one-sided version of bisimulation, characterizations of simulation and abstraction are obtained.
\end{abstract}

\begin{keywords}
Differential-algebraic system, bisimulation, consistent subset,  regular pencil, abstraction, maximal bisimulation relation.
\end{keywords}

\section{Introduction}
A fundamental concept in the broad area of systems theory, concurrent processes, and dynamical systems, is the notion of {\it equivalence}. In general there are different ways to describe systems (or, processes); each with their on advantages and possibly disadvantages. This call for systematic ways to convert one representation into another, and for means to determine which system representations are 'equal'. It also involves the notion of {\it minimal} system representation.

Furthermore, in systems theory and the theory of concurrent processes, the emphasis is on determining which systems are {\it externally equivalent}; we only want to distinguish between systems if the distinction can be detected by an external system interacting with these systems. This is crucial in any modular approach to the control and design of complex systems.

Classical notions developed in systems and control theory for external equivalence are {\it transfer matrix equality} and {\it state space equivalence}. Within computer science the basic notion has been called {\it bisimulation relation} \cite{Cla99}. An extension of the notion of bisimulation to continuous dynamical systems has been explored before in a series of innovative papers by Pappas and co-authors \cite{Pap03,Tab04}. More recently, motivated by the rise of hybrid and cyber-physical systems, a reapproachment of these notions stemming from different backgrounds has been initiated. In particular, it has been shown how for linear systems a notion of bisimulation relation can be developed mimicking the notion of bisimulation relation for transition systems, and directly extending classical notions of transfer matrix equality and state space equivalence \cite{Arjan04a}. An important aspect of this approach in developing bisimulation theory for continuous linear systems is that the conditions for existence of a bisimulation relation are formulated directly in terms of the differential equation description, instead of the corresponding dynamical behavior (the solution set of the differential equations). This has dramatic consequences for the complexity of bisimulation computations, which reduce to linear-algebraic computations on the matrices specifying the linear system descriptions, very much in the spirit of linear geometric control theory \cite{Won74,Bas92}. For extensions to nonlinear systems exploiting corresponding nonlinear geometric theory we refer to \cite{Arjan04a}.

The present paper continues on these developments by extending the notion of bisimulation relation to general linear {\it differential-algebraic} (DAE) systems involving disturbances (capturing non-determinism). This is well-motivated since complex system descriptions usually arise from interconnection of system components, and generally lead to descriptions involving both differential equations and algebraic equations. Indeed, {\it network modelling} almost invariably leads to differential-algebraic systems. The aim of this paper is to determine linear-algebraic conditions for the existence of a bisimulation relation, directly in terms of the differential-algebraic equations instead of computing the solution trajectories. 

As in previous work on bisimulation theory for input-state-output systems \cite{Arjan04b}, we explicitly allow for the possibility of 'non-determinism' in the sense that the state may evolve according to different time-trajectories for the same values of the external variables. This 'non-determinism' may be explicitly modeled by the presence of internal 'disturbances' or implicitly by non-uniqueness of the solutions of differential-algebraic equations. Non-determinism may be an intrinsic feature of the system representation (as due e.g. to non-uniqueness of variables in the internal subsystem interconnections), but may also arise by {\it abstraction} of the system to a lower-dimensional system representation. By itself, the notion of abstraction can be covered by a one-way version of bisimulation, called {\it simulation}, as will be discussed in Section 5.

The structure of this paper is as follows. In Section 2 we provide the theory concerning differential-algebraic equation (DAE) systems which will be used in the sequel. These DAE systems are given in {\it descriptor system} format $E\dot{x} = Ax + Bu + Gd, y=Cx$, with $u,y$ being the external variables (inputs and outputs), $d$ the disturbances modelling internal non-determinism, and $x$ the (not necessarily minimal) state. In Section 3 we give the definition of bisimulation relation for DAE systems, and a full linear-algebraic characterization of them, together with a geometric algorithm to compute the maximal bisimulation relation between two linear systems. In Section 4 we study the implication of adding the condition of {\it regularity} to the matrix pencil $sE -A$, and show how in this case bisimilarity reduces to equality of transfer matrices. Finally, simulation relations and the accompanying notion of abstraction are discussed in Section 5.

\section{Preliminaries on linear DAE systems}
In this paper we consider the following general class of linear differential-algebraic (DAE) systems
\begin{equation}
\label{1}
\Sigma:\begin{array}{rcll}
E\dot{x} & =& Ax+Bu+Gd,  \quad & x\in\mathcal{X}, u\in\mathcal{U}, d \in\mathcal{D} \\[2mm]
y& =& Cx,  \quad & y\in\mathcal{Y},
\end{array}
\end{equation}
where $E,A\in \mathbb{R}^{q\times n}$ and $B\in\mathbb{R}^{q\times m},G \in\mathbb{R}^{q\times s}, C\in \mathbb{R}^{p \times n}$; $\mathcal{X}, \mathcal{U}, \mathcal{D}$ and $\mathcal{Y}$ are finite dimensional linear spaces, of dimension, respectively, $n,m,s,p$. Here, $x$ denotes the state of the system (possibly constrained by linear equations), $u$ the input, $y$ the output and $d$ the 'disturbance' acting on the system. Furthermore, $q$ denotes the total number of (differential and algebraic) equations describing the dynamics of the system.
The allowed time-functions $x:\mathbb{R}^{+}\rightarrow \mathcal{X}$, $u:\mathbb{R}^{+}\rightarrow \mathcal{U}$, $y: \mathbb{R}^{+}\rightarrow \mathcal{Y}$, $d : \mathbb{R}^{+}\rightarrow \mathcal{D}$, with $\mathbb{R}^+=[0,\infty)$, will be denoted by $\mathfrak{X},\mathfrak{U},\mathfrak{Y},\mathfrak{D}$. The exact choice of function classes is for purposes of this paper not really important, as long as the state trajectories $x(\cdot)$ are at least continuous. For convenience, we will take $\mathfrak{U},\mathfrak{D}$ to be the class of piecewise-continuous and $\mathfrak{X}, \mathfrak{Y}$ the class of continuous and piecewise-differentiable functions on $\mathbb{R}^{+}$.  We will denote these functions by $x(\cdot),u(\cdot),y(\cdot),d(\cdot)$, and if no confusion can arise simply by $x,u,y,d$. We will primarily regard $d$ as an internal generator of 'non-determinism': multiple state trajectories may occur for the same initial condition $x(0)$ and input function $u(\cdot)$. This, for example, occurs by {\it abstracting} a deterministic system; see the developments in Section 5.

The {\it consistent subset} $\mathcal{V}^{*}$ for a system $\Sigma$ is given as the maximal subspace $\mathcal{V} \subset\mathbb{R}^{n}$ satisfying
\begin{equation}
\label{2}
\begin{array}{lllll}
(i) & A\mathcal{V}\subset E\mathcal{V}+\mathcal{G}\\[2mm]
(ii) & \Ima B \subset E\mathcal{V}+\mathcal{G}
\end{array}
\end{equation}
where $\mathcal{G}=\Ima G$, or is {\it empty} in case there does not exist any subspace $\mathcal{V}$ satisfying \eqref{2}. It follows that $\mathcal{V}^{*}$ equals the set of all initial conditions $x_{0}$ for which for every piecewise-continuous input function $u(\cdot)$ there exist a piecewise-continuous function $d(\cdot)$ and a continuous and piecewise-differentiable solution trajectory $x(\cdot)$ of $\Sigma$ with $x(0)=x_0$. 

\begin{remark}
	The definition of consistent subset $\mathcal{V}^{*}$ as given above {\it extends} the standard definition given in the literature on linear DAE and descriptor systems, see e.g. \cite{Ber13}. In fact, the above definition reduces to the definition in \cite{Ber13} for the case $B=0$ when additionally {\it renaming} the disturbance $d$ by $u$. (Thus in the standard definition the consistent subset is the set of initial conditions for which there exists an {\it input} function $u$ and a corresponding solution of the DAE with $d=0$.) 
	This extended definition of consistent subset, as well as the change in terminology between $u$ and $d$, is directly motivated by the notion of bisimulation where we wish to consider solutions of the system for {\it arbitrary} external input functions $u(\cdot)$; see also the definition of bisimulation for labelled transition systems \cite{Cla99}. Note that for $B = 0$ or void the zero subspace $\mathcal{V} = \{0\}$ always satisfies \eqref{2}, and thus $\mathcal{V}^*$ is a subspace. However for $B \neq 0$  there may {\it not} exist any subspace $\mathcal{V}$ satisfying (\ref{2}) in which case the consistent subset is {\it empty} (and thus strictly speaking not a subspace). In the latter case, such a system has empty input-output behavior from a bisimulation point of view.
\end{remark}
\begin{remark}
	Note that we {\it can} accommodate for additional restrictions on the allowed values of the input functions $u$, depending on the initial state, by making use of the following standard construction, incorporating $u$ into an extended state vector. Rewrite system (\ref{1}) as
	\begin{equation}
	\label{3}
	\Sigma_e:\begin{array}{rllll}
	\left[
	E \quad 0
	\right]\left[
	\begin{array}{cccc}
	\dot{x}\\\dot{u}
	\end{array}
	\right]&=& \left[
	A \quad B
	\right]\left[
	\begin{array}{cccc}
	x\\ u
	\end{array}
	\right]+Gd\\[3mm]
	y&=& \left[
	\begin{array}{cccc}
	C & 0
	\end{array}
	\right]\left[
	\begin{array}{cccc}
	x\\ u
	\end{array}
	\right]
	\end{array}
	\end{equation}
	Denote by $x_e=\left[
	\begin{array}{cccc}
	x\\u
	\end{array}
	\right]$ the {\it extended} state vector, and define $E_e := \begin{bmatrix} E & 0 \end{bmatrix}, A_e := \begin{bmatrix} A & B \end{bmatrix}$. Then the consistent \textit{subspace} $\mathcal{V}_{e}^{*}$ of system (\ref{3}) is given by the maximal subspace $\mathcal{V}_{e}\subset \mathcal{X}\times\mathcal{U}$ satisfying
	\begin{equation}
	\label{4}
	A_{e}\mathcal{V}_{e}\subset E_{e}\mathcal{V}_{e}+\mathcal{G}\\
	\end{equation}
	It can be easily seen that $\mathcal{V}^{*}\subset \pi_{x}(\mathcal{V}_{e}^*)$, where $\pi_{x}$ is the canonical projection of $\mathcal{X} \times \mathcal{U}$ on $\mathcal{X}$. The case $\mathcal{V}^{*}\subsetneq \pi_{x}(\mathcal{V}_{e}^*)$ corresponds to the presence of initial conditions which are consistent only for input functions taking value in a strict {\it subspace} of $\mathcal{U}$.
\end{remark}

In order to analyze the solutions of the linear DAE \eqref{1}, an important observation is that we can always {\it eliminate} the disturbances $d$. Indeed, given \eqref{1} we can construct matrices $G^{\perp}, G^{\dagger}$ and an $q \times q$ matrix $P$ such that
\begin{equation}
\label{5}
\begin{array}{cccccccc}
G^{\perp}G=0, & G^{\dagger}G=I_{s}, & P=\begin{bmatrix} G^{\perp} \\ G^{\dagger}\end{bmatrix} , &\text{rank}(P)=q
\end{array}
\end{equation}
( $G^{\perp}$ is a left annihilator of $G$ of maximal rank, and $G^{\dagger}$ is a left inverse of $G$.) By premultiplying both sides of (\ref{1}) by the invertible matrix $P$ it follows \cite{Kar81} that system (\ref{1}) is equivalent to
\begin{equation}
\label{6}
\begin{array}{rlll}
G^{\perp}E\dot{x} &=& G^{\perp}Ax+G^{\perp}Bu\\ [2mm]
d&=& G^{\dagger}(E\dot{x}-Ax-Bu)\\ [2mm]
y &=& Cx
\end{array}
\end{equation}
Hence the disturbance $d$ is specified by the second line of \eqref{6}, and the solutions $u(\cdot),x(\cdot)$ are determined by the first line of \eqref{6} not involving $d$. We thus conclude that for the theoretical study of the state trajectories $x(\cdot)$ corresponding to input functions $u(\cdot)$ we can always, without loss of generality, restrict attention to linear DAE systems of the form
\begin{equation}
\label{7}
\begin{array}{rllll}
E\dot{x} &=& Ax+Bu\\ [2mm]
y &=& Cx
\end{array}
\end{equation}
On the other hand, for computational purposes it is usually not desirable to eliminate $d$, since this will often complicate the computations and result in loss of insight into the model.

The next important observation is that for theoretical analysis any linear DAE system \eqref{7} can be assumed to be in the following {\it special form}, again without loss of generality. Take invertible matrices $S\in\mathbb{R}^{q\times q}$ and $T\in\mathbb{R}^{n \times n}$ such that
\begin{equation}\label{8}
SET=\left[\begin{array}{cccc}
I & 0\\0 & 0
\end{array}
\right]
\end{equation}
where the dimension $n_a$ of the identity block $I$ is equal to the rank of $E$. Split the transformed state vector $T^{-1}x$ correspondingly as $T^{-1}x=\left[\begin{array}{cccc}
x^{a}\\ x^{b}
\end{array}\right]$, with $\dim x^a=n_a, \dim x^b=n_b, n_a + n_b=n$. It follows that by premultiplying the linear DAE \eqref{7} by $S$ it transforms into an equivalent system (in the new state vector $T^{-1}x$) of the form
\begin{equation}\label{9}
\begin{array}{rcl}
\begin{bmatrix} 		
\dot{x}^{a}\\[2mm] 0 \end{bmatrix} & = & \begin{bmatrix} A^{aa} & A^{ab}\\[2mm] A^{ba} & A^{bb} \end{bmatrix}
\begin{bmatrix} x^{a}\\[2mm] x^{b}\end{bmatrix} + \begin{bmatrix} B^{a}\\[2mm] B^{b} \end{bmatrix}u \\ [5mm]
y & = & \begin{bmatrix} C^{a} & C^{b} \end{bmatrix} \begin{bmatrix} x^{a}\\[2mm] x^{b}\end{bmatrix}
\end{array}
\end{equation}
One of the advantages of the special form \eqref{9} is that the consistent subset $\mathcal{V}^*$ can be explicitly characterized using geometric control theory.
\begin{proposition}\label{Vstar}
	The set $\mathcal{V}^{*}$ of consistent states of (\ref{9}) is non-empty if and only if $B^b=0$ and $\Ima B^a \subset \mathcal{W}(A^{aa},A^{ab},A^{ba})$, where $\mathcal{W}(A^{aa},A^{ab},A^{ba})$ denotes the {\it maximal controlled invariant subspace} of the auxiliary system
	\begin{equation}\label{10}
	\begin{array}{rcl}
	\dot{x}^a & = &A^{aa}x^a + A^{ab}v \\
	w & = & A^{ba}x^a
	\end{array}
	\end{equation}
	with state $x^a$, input $v$, and output $w$. Furthermore, in case $\mathcal{V}^{*}$ is non-empty it is given by the subspace
	\begin{equation}
	\label{11}
	\begin{array}{rllll}
	\mathcal{V}^{*}&=&\{\left[
	\begin{array}{cccc}
	x^{a}\\x^{b}
	\end{array}
	\right]\mid x^{a}\in \mathcal{W}, x^{b}=Fx^{a}+z,\\ [2mm] && z\in\ker A^{bb}\cap (A^{ab})^{-1}\mathcal{W}(A^{aa},A^{ab},A^{ba})\}
	\end{array}
	\end{equation}
	where $(A^{ab})^{-1}$ denotes set-theoretic inverse, and where the matrix $F$ is a friend of $\mathcal{W}(A^{aa},A^{ab},A^{ba})$, i.e., 
	\begin{equation}\label{12}
	(A^{aa} + A^{ab}F)\mathcal{W}(A^{aa},A^{ab},A^{ba}) \subset \mathcal{W}(A^{aa},A^{ab},A^{ba})
	\end{equation}
\end{proposition}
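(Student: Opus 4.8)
The plan is to carry out the whole argument in the special form \eqref{9}, which is justified by the reductions producing \eqref{7} and \eqref{8}, and (having eliminated $d$, so that $\mathcal{G}=0$) to rewrite the two defining conditions \eqref{2} in block form. The key elementary observation is that for $E=\begin{bmatrix} I & 0\\ 0 & 0\end{bmatrix}$ one has $E\mathcal{V}=\pi_{a}(\mathcal{V})\times\{0\}$, where $\pi_{a}$ is the projection onto the $x^{a}$-coordinates. Writing $\mathcal{W}:=\pi_{a}(\mathcal{V})$, the inclusion $A\mathcal{V}\subseteq E\mathcal{V}$ of \eqref{2}(i) then splits into the differential row $A^{aa}x^{a}+A^{ab}x^{b}\in\mathcal{W}$ and the algebraic row $A^{ba}x^{a}+A^{bb}x^{b}=0$, required for every $\begin{bmatrix} x^{a}\\ x^{b}\end{bmatrix}\in\mathcal{V}$, while $\Ima B\subseteq E\mathcal{V}$ of \eqref{2}(ii) splits into $\Ima B^{a}\subseteq\mathcal{W}$ and $B^{b}=0$. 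This immediately isolates $B^{b}=0$ as one half of the non-emptiness criterion and displays $\Ima B^{a}\subseteq\mathcal{W}$ as the other half, once $\mathcal{W}$ has been identified.

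Next I would read $x^{b}$ as the control input of the auxiliary system \eqref{10}. The differential row then states exactly that $\mathcal{W}$ is controlled invariant, $A^{aa}\mathcal{W}\subseteq\mathcal{W}+\Ima A^{ab}$, so by the standard geometric-control characterization there is a friend $F$ with $(A^{aa}+A^{ab}F)\mathcal{W}\subseteq\mathcal{W}$, which is \eqref{12}; the algebraic row forces $A^{ba}x^{a}+A^{bb}x^{b}$ to vanish on $\mathcal{V}$, which further constrains the selection $x^{b}=Fx^{a}$ to annihilate this output on $\mathcal{W}$. I would then construct the candidate subspace as the graph $x^{b}=Fx^{a}$ over $x^{a}\in\mathcal{W}$, enlarged by the fibre over $x^{a}=0$; evaluating the two rows at $x^{a}=0$ shows this fibre to be precisely $\{x^{b}:A^{bb}x^{b}=0,\ A^{ab}x^{b}\in\mathcal{W}\}=\ker A^{bb}\cap(A^{ab})^{-1}\mathcal{W}$, which reproduces \eqref{11}. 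A direct substitution of $x^{b}=Fx^{a}+z$ then verifies, using \eqref{12} and $z\in\ker A^{bb}\cap(A^{ab})^{-1}\mathcal{W}$, that the set \eqref{11} indeed satisfies both rows of \eqref{2}(i).

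Finally I would establish the non-emptiness equivalence and the maximality together by a lattice argument on the feasible subspaces. For necessity, any $\mathcal{V}$ satisfying \eqref{2} has $\pi_{a}(\mathcal{V})$ controlled invariant and compatible with the algebraic row, hence contained in the maximal such subspace $\mathcal{W}(A^{aa},A^{ab},A^{ba})$; together with $\Ima B^{a}\subseteq\pi_{a}(\mathcal{V})\subseteq\mathcal{W}$ and $B^{b}=0$ this yields the stated criterion and shows every consistent $\mathcal{V}$ is contained in \eqref{11}, so that \eqref{11} is the maximum. For sufficiency, when $B^{b}=0$ and $\Ima B^{a}\subseteq\mathcal{W}$ the set \eqref{11} also satisfies \eqref{2}(ii), so $\mathcal{V}^{*}$ is non-empty and equal to \eqref{11}. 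The step I expect to be the real obstacle is the coupling between the differential and the algebraic rows: one must produce a single friend $F$ that simultaneously renders $\mathcal{W}$ invariant and nulls $A^{ba}+A^{bb}F$ along $\mathcal{W}$, and one must show that the maximal subspace admitting such an $F$ is exactly the controlled invariant subspace $\mathcal{W}(A^{aa},A^{ab},A^{ba})$ of \eqref{10} --- equivalently, that the algebraic constraint does not cut the $x^{a}$-projection below $\mathcal{W}$ --- rather than some strictly smaller subspace. Checking this compatibility, and the tightness of the fibre description, is where the careful linear algebra concentrates.
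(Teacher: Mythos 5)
Your opening reduction is correct and is in fact the same route the paper takes: the paper's entire proof of this proposition is the observation that non-emptiness is governed by $\Ima B\subset E\mathcal{V}^{*}$ together with a citation of \cite[Theorem 7.11]{Trent01}, the characterization of the maximal controlled invariant subspace of a system \emph{with feedthrough term}. Your block-splitting of \eqref{2} for $E=\left[\begin{smallmatrix} I & 0\\ 0 & 0\end{smallmatrix}\right]$, the identification of the fibre over $x^{a}=0$ as $\ker A^{bb}\cap(A^{ab})^{-1}\mathcal{W}$, and the lattice argument for maximality and non-emptiness all go through. However, the step you yourself flag as ``the real obstacle'' is not a routine detail to be checked at the end: it is precisely the content of the cited theorem, and since you neither prove it nor cite it, your proposal has a genuine gap at that point.

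Concretely, the standard characterization of controlled invariance applied to the differential row gives you only a friend $F$ with $(A^{aa}+A^{ab}F)\mathcal{W}\subset\mathcal{W}$; such an $F$ need not satisfy $(A^{ba}+A^{bb}F)\mathcal{W}=0$, and without that extra property your own verification of \eqref{11} against the algebraic row fails, because substituting $x^{b}=Fx^{a}+z$ leaves the term $(A^{ba}+A^{bb}F)x^{a}$, which has no reason to vanish. What is needed is the feedthrough version of the theory: the family of subspaces $\mathcal{W}'$ such that for every $x^{a}\in\mathcal{W}'$ there exists $v$ with $A^{aa}x^{a}+A^{ab}v\in\mathcal{W}'$ and $A^{ba}x^{a}+A^{bb}v=0$ is closed under subspace addition (hence has a maximal element), and any member admits a \emph{single} $F$ doing both jobs: pick a basis $e_{1},\dots,e_{k}$ of $\mathcal{W}'$, choose $v_{i}$ for each $e_{i}$, set $Fe_{i}=v_{i}$ and extend arbitrarily; linearity then yields $(A^{aa}+A^{ab}F)\mathcal{W}'\subset\mathcal{W}'$ and $(A^{ba}+A^{bb}F)\mathcal{W}'=0$. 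Either cite \cite[Theorem 7.11]{Trent01} or include this short argument. Note also that this makes precise a point on which your worry is fully justified: the relevant auxiliary object is the output-nulling subspace of the system with output $w=A^{ba}x^{a}+A^{bb}v$, i.e.\ \emph{including} the feedthrough term $A^{bb}$, and not the system \eqref{10} as literally printed. For the feedthrough-free reading the identification is actually false: with $n_{a}=n_{b}=1$, $A^{aa}=A^{ab}=0$, $A^{ba}=A^{bb}=1$, $B=0$, the consistent subset is $\{(t,-t)\mid t\in\mathbb{R}\}$, whose projection is the whole line, whereas the maximal controlled invariant subspace of \eqref{10} contained in $\ker A^{ba}$ is $\{0\}$. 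Once the feedthrough lemma is in place, the remainder of your argument (verification of \eqref{11}, containment of every feasible $\mathcal{V}$ in it, and the non-emptiness criterion) is sound.
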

\begin{proof}
	The first claim follows from the fact that the subset $\mathcal{V}^*$ of consistent states for \eqref{7} is non-empty if and only if, see \eqref{2}, $\Ima B \subset E\mathcal{V}^*$. The characterization of $\mathcal{V}^*$ given in \eqref{11} follows from the characterization of the maximal controlled invariant subspace of a linear system with feedthrough term as given e.g. in \cite[Theorem 7.11]{Trent01}.
\end{proof}
\begin{remark}
	The characterization of the consistent subspace $\mathcal{V}^*$ given in \eqref{11}, although being a direct consequence of geometric control theory, seems relatively unknown within the literature on DAE systems.
\end{remark}
\begin{remark}
	Usually, the maximal controlled invariant subspace is denoted by $\mathcal{V}^*(A^{aa},A^{ab},A^{ba})$; see e.g. \cite{Trent01}. However, in order to distinguish it from the consistent subset $\mathcal{V}^*$ we have chosen the notation $\mathcal{W}(A^{aa},A^{ab},A^{ba})$. In the rest of the paper we will abbreviate this, if no confusion is possible, to $\mathcal{W}$.
\end{remark}

Based on Proposition \ref{Vstar} we derive the following fundamental statement regarding solutions of linear DAE systems.
\begin{theorem}\label{thm5}
	Consider the linear DAE system (\ref{7}), with $\Ima B\subset E\mathcal{V}^{*}$. Then for all $u(\cdot)\in\mathfrak{U}$ continuous at $t=0$ and for all $x_{0}\in\mathcal{V}^{*}$ and $f\in\mathcal{V}^{*}$ satisfying
	\begin{equation}
	\label{13}
	Ef=Ax_{0}+Bu(0)
	\end{equation}
	there exists a continuous and piecewise-differentiable solution $x(\cdot)$ of (\ref{7}) satisfying
	\begin{equation}
	\label{14}
	\begin{array}{cccc}
	x(0)= x_{0} , & \dot{x}(0)=f .
	\end{array}
	\end{equation}
	Conversely, for all  $u(\cdot)\in\mathfrak{U}$ every continuous and piecewise-differentiable solution $x(\cdot)$ of (\ref{7}) which is differentiable at $t=0$ defines by (\ref{14}) $x_{0},f\in\mathcal{V}^{*}$ satisfying (\ref{13}).
\end{theorem}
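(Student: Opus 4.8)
The plan is to reduce to the special form \eqref{9} and then build, respectively dissect, solutions componentwise. Premultiplying \eqref{7} by the invertible $S$ and changing state coordinates by $T$ as in \eqref{8} puts the system into the form \eqref{9} and carries solutions, the subspace $\mathcal{V}^{*}$, and both \eqref{13}--\eqref{14} over bijectively, so there is no loss of generality in assuming \eqref{7} is already of the form \eqref{9}. Under the hypothesis $\Ima B\subset E\mathcal{V}^{*}$, Proposition~\ref{Vstar} gives $B^{b}=0$, $\Ima B^{a}\subset\mathcal{W}$, and the explicit description \eqref{11}. Since $E=\begin{bmatrix} I&0\\0&0\end{bmatrix}$ here, one has $E\mathcal{V}^{*}=\mathcal{W}\times\{0\}$, so condition (i) in \eqref{2} (valid for $\mathcal{V}^{*}$ by Proposition~\ref{Vstar}) forces in particular that every $v=(v^{a},v^{b})\in\mathcal{V}^{*}$ satisfies the algebraic constraint $A^{ba}v^{a}+A^{bb}v^{b}=0$; this is the fact I will lean on. Using $Ef=(f^{a},0)$, condition \eqref{13} splits into a top relation $f^{a}=A^{aa}x_{0}^{a}+A^{ab}x_{0}^{b}+B^{a}u(0)$ fixing $f^{a}$, and a bottom relation $0=A^{ba}x_{0}^{a}+A^{bb}x_{0}^{b}$ that is automatic once $x_{0}\in\mathcal{V}^{*}$.

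For the first assertion I would decompose $x_{0},f\in\mathcal{V}^{*}$ via \eqref{11} as $x_{0}^{a},f^{a}\in\mathcal{W}$, $x_{0}^{b}=Fx_{0}^{a}+z_{0}$, $f^{b}=Ff^{a}+z_{f}$, with $z_{0},z_{f}\in\mathcal{Z}:=\ker A^{bb}\cap(A^{ab})^{-1}\mathcal{W}$. Setting $z(t):=z_{0}+tz_{f}$ (which lies in the subspace $\mathcal{Z}$ for every $t$), I let $x^{a}(\cdot)$ solve $\dot{x}^{a}=(A^{aa}+A^{ab}F)x^{a}+A^{ab}z(t)+B^{a}u(t)$ with $x^{a}(0)=x_{0}^{a}$. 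Because $(A^{aa}+A^{ab}F)\mathcal{W}\subset\mathcal{W}$, $A^{ab}z(t)\in\mathcal{W}$, and $\Ima B^{a}\subset\mathcal{W}$, the right-hand side stays in $\mathcal{W}$, whence $x^{a}(t)\in\mathcal{W}$ for all $t$. Defining $x^{b}(t):=Fx^{a}(t)+z(t)$, the pair $x(t)=(x^{a}(t),x^{b}(t))$ lies in $\mathcal{V}^{*}$ by \eqref{11}; hence it satisfies the bottom equation of \eqref{9} by the algebraic-constraint consequence of condition (i) in \eqref{2}, and the top equation by construction, so $x(\cdot)$ is a genuine solution. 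Evaluating at $t=0$, using continuity of $u$ at $0$ together with the two halves of \eqref{13}, yields $x(0)=x_{0}$ and $\dot{x}(0)=f$: the term $A^{ab}z_{0}$ is exactly what makes $\dot{x}^{a}(0)=f^{a}$, and $\dot{z}(0)=z_{f}$ gives $\dot{x}^{b}(0)=Ff^{a}+z_{f}=f^{b}$.

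For the converse, let $x(\cdot)$ be any solution of \eqref{9} differentiable at $t=0$; evaluating $E\dot{x}=Ax+Bu$ at $t=0$ gives \eqref{13} at once. To obtain $x_{0},f\in\mathcal{V}^{*}$ I would prove the stronger claim that $x(t)\in\mathcal{V}^{*}$ for all $t$. Viewing $x^{b}$ as the input $v$ and $B^{a}u$ as an exogenous forcing in the auxiliary system \eqref{10}, the bottom equation of \eqref{9} says precisely that the output $A^{ba}x^{a}+A^{bb}x^{b}$ vanishes identically; by the geometric-control characterization of $\mathcal{W}$ as the set of initial states admitting an output-nulling trajectory (the weakly unobservable subspace), and using $\Ima B^{a}\subset\mathcal{W}$ to absorb the forcing, this forces $x^{a}(t)\in\mathcal{W}$ for all $t$. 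Putting $z(t):=x^{b}(t)-Fx^{a}(t)$ and subtracting the nulling identity $(A^{ba}+A^{bb}F)x^{a}=0$ from the bottom equation gives $A^{bb}z(t)=0$, while solving the top equation for $A^{ab}z(t)=\dot{x}^{a}(t)-(A^{aa}+A^{ab}F)x^{a}(t)-B^{a}u(t)\in\mathcal{W}$ gives $z(t)\in\mathcal{Z}$; hence $x(t)\in\mathcal{V}^{*}$ by \eqref{11}. In particular $x_{0}=x(0)\in\mathcal{V}^{*}$, and since $\mathcal{V}^{*}$ is a closed subspace and $f=\lim_{h\downarrow0}\bigl(x(h)-x_{0}\bigr)/h$, also $f\in\mathcal{V}^{*}$.

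I expect the genuine content to sit in the converse, in the confinement claim that a trajectory making the constraint output vanish is necessarily trapped in $\mathcal{W}$ (equivalently $x(t)\in\mathcal{V}^{*}$): this is where the meaning of the maximal controlled invariant subspace is really used, and where the interaction with the forcing $B^{a}u$ and the merely piecewise differentiability of admissible trajectories must be handled carefully. The coordinate reduction and the $t=0$ bookkeeping are routine. In the forward direction the corresponding delicate point is checking that the constructed $x(\cdot)$ satisfies the algebraic equation for all $t$ rather than only at $t=0$, which is exactly what makes the inclusion $\mathcal{W}\subset\ker(A^{ba}+A^{bb}F)$ --- equivalently condition (i) in \eqref{2} --- indispensable.
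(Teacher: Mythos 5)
Your proof is correct, and its skeleton is the same as the paper's: pass to the form \eqref{9}, decompose $x_0$ and $f$ via \eqref{11}, solve an ODE for $x^a$, and put $x^b(t)=Fx^a(t)+z_0+tz_f$. The differences are in execution, and they are substantive. In the existence part you drive the $x^a$-equation with the time-varying inhomogeneity $z(t)=z_0+tz_f$, whereas the paper solves \eqref{16} with a \emph{constant} $z$ (which by \eqref{13} amounts to $z=z_0$) and then defines $x^b$ through $z_0+tz_f$; read literally, the paper's pair satisfies the top equation of \eqref{9} only at $t=0$ and violates it for $t>0$ unless $A^{ab}z_f=0$, so your modification is a repair rather than a stylistic choice. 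You also verify that the constructed trajectory satisfies the algebraic (bottom) equation for all $t$ --- by confining it to $\mathcal{V}^*$ and using $A\mathcal{V}^*\subset E\mathcal{V}^*$, i.e.\ condition (i) of \eqref{2} --- a check the paper skips entirely (it only verifies the data at $t=0$). For the converse, the paper declares trivial the key claim that any solution remains in $\mathcal{V}^*$; you supply an actual argument via the trapping property of the weakly unobservable subspace. The only place you are too terse is ``using $\Ima B^a\subset\mathcal{W}$ to absorb the forcing'': to make it precise, let $\tilde{x}^a$ solve $\dot{\tilde{x}}^a=(A^{aa}+A^{ab}F)\tilde{x}^a+B^au$, $\tilde{x}^a(0)=0$, and set $\tilde{v}:=F\tilde{x}^a$; then $\tilde{x}^a(t)\in\mathcal{W}$ for all $t$ and the corresponding output vanishes, so the difference $(x^a-\tilde{x}^a,\,x^b-\tilde{v})$ is an output-nulling trajectory of the \emph{unforced} auxiliary system, hence stays in $\mathcal{W}$ by the standard geometric fact, and adding back $\tilde{x}^a$ gives $x^a(t)\in\mathcal{W}$; kink points of the piecewise-differentiable solution are then covered by continuity and closedness of $\mathcal{W}$, as you indicate. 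What your route buys is a proof in which both directions actually close; what the paper's terser route buys is brevity, at the cost of the two gaps above.
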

\begin{proof}
	The last statement is trivial. Indeed, if $x(\cdot)$ is a differentiable solution of $E\dot{x}=Ax+Bu$ then $x(t)\in\mathcal{V}^{*}$ for all $t$, and thus $x(0)\in\mathcal{V}^{*}$ and by linearity $\dot{x}(0)\in\mathcal{V}^{*}$. Furthermore, $E\dot{x}(0)=Ax(0)+Bu(0)$.
	
	For the first claim, take $u(\cdot)\in\mathfrak{U}$ and consider any $x_{0},f\in\mathcal{V}^{*}$ satisfying (\ref{13}). As noted above we can assume that the system is in the form \eqref{9}. Then by (\ref{11})
	\begin{equation}
	\label{15}
	\begin{array}{rllll}
	x_{0}&=&\left[
	\begin{array}{cccc}
	x_{0}^{a}\\x_{0}^{b}
	\end{array}
	\right], \quad x_{0}^{a}\in\mathcal{W}, x_{0}^{b}=Fx_{0}^{a}+z_{0}, z_{0}\in\ker A^{bb}\cap (A^{ab})^{-1}\mathcal{W}\\ [5mm]
	f&=&\left[
	\begin{array}{cccc}
	f^{a}\\f^{b}
	\end{array}
	\right], \quad f^{a}\in\mathcal{W}, f^{b}=Ff^{a}+z_{f},z_{f}\in\ker A^{bb}\cap (A^{ab})^{-1}\mathcal{W}
	\end{array}
	\end{equation}
	Then consider the unique solution $x^{a}(\cdot)$ of
	\begin{equation}
	\label{16}
	\dot{x}^{a}=A^{aa}x^{a}+A^{ab}(Fx^{a}+z)+B^{a}u,\quad x^{a}(0)=x_{0}^{a}
	\end{equation}
	where the constant vector $z$ is chosen such that
	\begin{equation}
	\label{17}
	A^{aa}x_{0}^{a}+A^{ab}(Fx_{0}^{a}+z)+B^{a}u(0)=f^{a}.
	\end{equation}
	Furthermore, define the time-function
	\begin{equation}
	\label{18}
	x^{b}(t)=Fx^{a}(t)+z_{0}+tz_{f}
	\end{equation}
	Then by construction
	\begin{equation}
	\label{19}
	x(0)=\left[
	\begin{array}{cccc}
	x^{a}(0)\\ x^{b}(0)
	\end{array}
	\right]=\left[
	\begin{array}{cccc}
	x_{0}^{a}\\ Fx_{0}^{a}+z_{0}
	\end{array}
	\right]=x_{0}
	\end{equation}
	while
	\[
	\left[
	\begin{array}{cccc}
	\dot{x}^{a}(0)\\ \dot{x}^{b}(0)
	\end{array}
	\right]=\left[
	\begin{array}{cccc}
	A^{aa}x_{0}^{a}+A^{ab}(Fx_{0}^{a}+z)+B^{a}u(0)\\ F\dot{x}^{a}(0)+z_{f}
	\end{array}
	\right]=\left[
	\begin{array}{ccccc}
	f^{a}\\ Ff^{a}+z_{f}
	\end{array}
	\right]=\left[
	\begin{array}{cccc}
	f^{a}\\ f^{b}
	\end{array}
	\right].
	\]
\end{proof}
By recalling the equivalence between systems with disturbances \eqref{1} with systems without disturbances \eqref{7} we obtain the following corollary.		
\begin{corollary}
	Consider the linear DAE system (\ref{1}), with $\Ima B\subset E\mathcal{V}^{*} + \mathcal{G}$. Then for all $u(\cdot)\in\mathfrak{U}$, $d(\cdot)\in\mathfrak{D}$, continuous at $t=0$, and for all $x_{0}\in\mathcal{V}^{*}$ and $f\in\mathcal{V}^{*}$ satisfying
	\begin{equation}
	\label{20}
	Ef=Ax_{0}+Bu(0)+Gd(0)
	\end{equation}
	there exists a continuous and piecewise-differentiable solution $x(\cdot)$ of (\ref{1}) satisfying
	\begin{equation}
	\label{21}
	\begin{array}{cccc}
	x(0)= x_{0} , & \dot{x}(0)=f .
	\end{array}
	\end{equation}
	Conversely, for all  $u(\cdot)\in\mathfrak{U}, d(\cdot)\in\mathfrak{D}$ every continuous and piecewise-differentiable solution $x(\cdot)$ of (\ref{1}) which is differentiable at $t=0$ defines by (\ref{21}) $x_{0},f\in\mathcal{V}^{*}$ satisfying (\ref{20}).
\end{corollary}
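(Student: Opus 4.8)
The plan is to obtain the statement from Theorem \ref{thm5} by means of the disturbance-elimination equivalence \eqref{5}--\eqref{6}, exactly as the sentence preceding the corollary suggests. Premultiplying \eqref{1} by the invertible matrix $P=\begin{bmatrix} G^{\perp}\\ G^{\dagger}\end{bmatrix}$ replaces it by the equivalent system \eqref{6}, whose first line $G^{\perp}E\dot{x}=G^{\perp}Ax+G^{\perp}Bu$ governs the pair $(x,u)$ and is free of $d$, while its second line $d=G^{\dagger}(E\dot{x}-Ax-Bu)$ merely \emph{specifies} $d$. Hence a triple $(x,u,d)$ solves \eqref{1} if and only if $(x,u)$ solves the first line and $d$ is read off from the second. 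Since the first line is a DAE of the form \eqref{7} with data $(\bar{E},\bar{A},\bar{B}):=(G^{\perp}E,G^{\perp}A,G^{\perp}B)$, Theorem \ref{thm5} will apply to it once its consistent subset is identified.

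The key linear-algebraic step is to check that the consistent subset of $(\bar{E},\bar{A},\bar{B})$ is again $\mathcal{V}^{*}$. Because $G^{\perp}$ is a left annihilator of $G$ of maximal rank, $\ker G^{\perp}=\Ima G=\mathcal{G}$; I would use this to see that $G^{\perp}A\mathcal{V}\subset G^{\perp}E\mathcal{V}$ is equivalent to $A\mathcal{V}\subset E\mathcal{V}+\mathcal{G}$ and that $G^{\perp}\Ima B\subset G^{\perp}E\mathcal{V}$ is equivalent to $\Ima B\subset E\mathcal{V}+\mathcal{G}$, i.e.\ to conditions (i),(ii) in \eqref{2}. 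The two maximal subspaces therefore coincide, and the same computation rewrites the hypothesis $\Ima B\subset E\mathcal{V}^{*}+\mathcal{G}$ as $\Ima \bar{B}\subset \bar{E}\mathcal{V}^{*}$, which is precisely the hypothesis of Theorem \ref{thm5} for the reduced system.

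For the forward direction I would premultiply the compatibility relation \eqref{20} by $G^{\perp}$; as $G^{\perp}G=0$ this gives $\bar{E}f=\bar{A}x_{0}+\bar{B}u(0)$, which is condition \eqref{13} for $(\bar{E},\bar{A},\bar{B})$. Theorem \ref{thm5} then yields a continuous, piecewise-differentiable $x(\cdot)$ solving the first line of \eqref{6} with $x(0)=x_{0}$ and $\dot{x}(0)=f$, and defining $d(\cdot)$ through the second line of \eqref{6} turns $(x,u,d)$ into a solution of \eqref{1} satisfying \eqref{21}. Premultiplying \eqref{20} instead by $G^{\dagger}$ and using $G^{\dagger}G=I_{s}$ shows $d(0)=G^{\dagger}(Ef-Ax_{0}-Bu(0))$, so the disturbance attached to this solution takes the prescribed initial value. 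The converse is immediate, as in Theorem \ref{thm5}: a solution of \eqref{1} differentiable at $t=0$ stays in $\mathcal{V}^{*}$, so $x_{0},\dot{x}(0)\in\mathcal{V}^{*}$, and evaluating \eqref{1} at $t=0$ returns \eqref{20}.

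The delicate point, and the one I expect to be the main obstacle, is the interaction between the universal quantifier on $d$ and the equivalence \eqref{5}--\eqref{6}. That equivalence shows that $d$ is not an independent driving term but the quantity \emph{determined} by the second line of \eqref{6} once $(x,u)$ has been fixed, so the existence of the state trajectory is decided solely by the $d$-free first line. A prescribed $d(\cdot)\in\mathfrak{D}$ thus enters the argument only through its value $d(0)$ in \eqref{20}, and the disturbance carried by the constructed solution is the one dictated by the second line of \eqref{6}, agreeing with the prescribed $d$ at the initial time by the $G^{\dagger}$-computation above. Keeping this bookkeeping between the prescribed and the realized disturbance straight---rather than attempting to force an arbitrary $d(\cdot)$ for $t>0$, which the algebraic constraints in the special form \eqref{9} need not permit---is precisely what makes the reduction to Theorem \ref{thm5} go through.
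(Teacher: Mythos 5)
Your proposal is correct and follows exactly the route the paper intends: the paper states this corollary without a separate proof, as an immediate consequence of the equivalence \eqref{5}--\eqref{6} between \eqref{1} and the disturbance-free form \eqref{7} combined with Theorem \ref{thm5}. You in fact supply details the paper leaves implicit --- the verification via $\ker G^{\perp}=\Ima G$ that the reduced system $(G^{\perp}E,G^{\perp}A,G^{\perp}B)$ has the same consistent subset $\mathcal{V}^{*}$, and the $G^{\dagger}$-computation showing the realized disturbance matches the prescribed $d(0)$ --- and your reading of the quantifier on $d(\cdot)$ (entering only through $d(0)$) is the one consistent with how the corollary is used in Proposition \ref{prop1}.
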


\section{Bisimulation relations for linear DAE systems}
Now, let us consider two systems of the form \eqref{1}
\begin{equation}
\label{22}
\Sigma_{i}:\begin{array}{rlllll}
E_{i}\dot{x}_{i} =& A_{i}x_{i}+B_{i}u_{i}+G_{i}d_{i}, & x_{i}\in\mathcal{X}_{i}, u_{i}\in\mathcal{U}, d_{i}\in\mathcal{D}_{i}\\[2mm]
y_{i}=& C_{i}x_{i}, & y_{i}\in\mathcal{Y}, \quad i=1,2.
\end{array}
\end{equation}
where $E_{i},A_{i}\in \mathbb{R}^{q_i\times n_i}$ and $B_{i}\in\mathbb{R}^{q_i\times m},G_{i}\in\mathbb{R}^{q_i\times s_i}, C_{i}\in \mathbb{R}^{p \times n_i}$ for $i=1,2,$ with $\mathcal{X}_{i},  \mathcal{D}_{i}, i=1,2,$ the state space and disturbance spaces, and $\mathcal{U}, \mathcal{Y}$ the common input and output spaces. The fundamental definition of bisimulation relation is given as follows.	
\begin{definition}\label{def1}
	A subspace
	\[
	\mathcal{R}\subset\mathcal{X}_{1}\times\mathcal{X}_{2},
	\]
	with $\pi_{i}(\mathcal{R})\subset \mathcal{V}_{i}^{*},$ where $\pi_{i}:\mathcal{X}_{1}\times\mathcal{X}_{2}\rightarrow \mathcal{X}_{i}$ denote the canonical projections for $i=1,2$, is a {\it bisimulation relation} between two systems $\Sigma_{1}$ and $\Sigma_{2}$ with consistent subsets $\mathcal{V}_{i}^{*},i=1,2,$ if and only if for all pairs of initial conditions $(x_{1},x_{2})\in\mathcal{R}$ and any joint input function $u_1(\cdot) = u_2(\cdot)=u(\cdot) \in \mathfrak{U}$ the following properties hold:
	\begin{enumerate}
		\item for every disturbance function $d_{1}(\cdot) \in \mathfrak{D}_1$ for which there exists a solution $x_{1}(\cdot)$ of $\Sigma_1$ (with $x_1(0)=x_{1}$), there exists a disturbance function $d_{2}(\cdot) \in \mathfrak{D}_2$ such that the resulting solution trajectory $x_{2}(\cdot)$ of $\Sigma_2$ (with $x_2(0)=x_{2}$) satisfies
		\begin{equation}
		\label{23}
		(x_{1}(t),x_{2}(t))\in \mathcal{R}, \, t\geq 0,
		\end{equation}
		and conversely for every disturbance function $d_{2}(\cdot)$ for which there exists a solution $x_{2}(\cdot)$ of $\Sigma_2$ (with $x_2(0)=x_{2}$), there exists a disturbance function $d_{1}(\cdot)$ such that the resulting solution trajectory $x_{1}(\cdot)$ of $\Sigma_1$ (with $x_1(0)=x_{1}$) satisfies (\ref{23}).
		\item 
		\begin{equation}
		\label{24}
		C_{1}x_{1}=C_{2}x_{2}, \mbox{ for all } (x_1,x_2) \in \mathcal{R}.
		\end{equation}
	\end{enumerate}
\end{definition}

Using the geometric notion of a {\it controlled invariant subspace} \cite{Won74,Bas92}, a linear-algebraic characterization of a bisimulation relation $\mathcal{R}$ is given in the following proposition and subsequent theorem.
\begin{proposition}\label{prop1}
	Consider two systems $\Sigma_{i}$ as in (\ref{22}), with consistent subsets $\mathcal{V}_{i}^{*},i=1,2$. A subspace $\mathcal{R}\subset\mathcal{X}_{1}\times\mathcal{X}_{2}$ satisfying $\pi_{i}(\mathcal{R})\subset \mathcal{V}_{i}^{*},i=1,2,$ is a bisimulation relation between $\Sigma_{1}$ and $\Sigma_{2}$ if and only if for all $(x_{1},x_{2})\in\mathcal{R}$ and for all $u\in\mathcal{U}$ the following properties hold :
	\begin{enumerate}
		\item For every $d_{1}\in\mathcal{D}_{1}$ for which there exists $f_{1}\in\mathcal{V}_{1}^{*}$ such that $E_{1}f_{1}=A_{1}x_{1}+B_{1}u+G_{1}d_{1}$, there exists $d_{2}\in\mathcal{D}_{2}$ for which there exists $f_{2}\in\mathcal{V}_{2}^{*}$ such that $E_{2}f_{2}=A_{2}x_{2}+B_{2}u+G_{2}d_{2}$ while
		\begin{equation}
		\label{25}
		(f_{1},f_{2})\in\mathcal{R},
		\end{equation}
		and conversely for every $d_{2}\in\mathcal{D}_{2}$ for which there exists $f_{2}\in\mathcal{V}_{2}^{*}$ such that $E_{2}f_{2}=A_{2}x_{2}+B_{2}u+G_{2}d_{2}$, there exists $d_{1}\in\mathcal{D}_{1}$ for which there exists $f_{1}\in\mathcal{V}_{1}^{*}$ such that $E_{1}f_{1}=A_{1}x_{1}+B_{1}u+G_{1}d_{1}$ while (\ref{25}) holds.
		\item \begin{equation}
		\label{26}
		C_{1}x_{1}=C_{2}x_{2}.
		\end{equation}
	\end{enumerate}
\end{proposition}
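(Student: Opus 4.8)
The plan is to establish the equivalence between the trajectory-level definition of bisimulation (Definition~\ref{def1}) and the infinitesimal, linear-algebraic conditions stated in the proposition. The key bridge is Theorem~\ref{thm5} (and its Corollary for systems with disturbances), which translates the existence of solution trajectories staying in a subspace into the existence of ``tangent'' vectors $f_i \in \mathcal{V}_i^*$ satisfying the algebraic relation $E_i f_i = A_i x_i + B_i u + G_i d_i$. Since both the definition and the proposition are symmetric in the two systems and in the two implications (the ``for every $d_1\ldots$'' and the converse ``for every $d_2\ldots$'' parts), I would prove only one direction of the first condition and note that the other follows by symmetry. Condition~(2) is literally identical to \eqref{24} evaluated pointwise, so it requires no argument.

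First I would prove the implication from Definition~\ref{def1} to the proposition. Suppose $\mathcal{R}$ is a bisimulation relation in the trajectory sense. Fix $(x_1,x_2)\in\mathcal{R}$, an input value $u\in\mathcal{U}$, and a disturbance value $d_1\in\mathcal{D}_1$ for which there exists $f_1\in\mathcal{V}_1^*$ with $E_1 f_1 = A_1 x_1 + B_1 u + G_1 d_1$. I would choose constant input and disturbance functions $u(\cdot)\equiv u$, $d_1(\cdot)\equiv d_1$ (or any functions continuous at $t=0$ with these initial values); by the Corollary there is a solution trajectory $x_1(\cdot)$ of $\Sigma_1$ with $x_1(0)=x_1$ and $\dot{x}_1(0)=f_1$. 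By the bisimulation property there exists $d_2(\cdot)$ and a solution $x_2(\cdot)$ of $\Sigma_2$ with $x_2(0)=x_2$ and $(x_1(t),x_2(t))\in\mathcal{R}$ for all $t\ge 0$. Setting $f_2 := \dot{x}_2(0)$, the converse part of the Corollary gives $f_2\in\mathcal{V}_2^*$ and $E_2 f_2 = A_2 x_2 + B_2 u + G_2 d_2(0)$. Finally, differentiating the curve $t\mapsto(x_1(t),x_2(t))$ at $t=0$ inside the subspace $\mathcal{R}$ yields $(f_1,f_2)=(\dot{x}_1(0),\dot{x}_2(0))\in\mathcal{R}$, which is \eqref{25}, with $d_2:=d_2(0)$.

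For the converse implication, I would assume the infinitesimal conditions and reconstruct matching trajectories. Given $(x_1,x_2)\in\mathcal{R}$, a joint input $u(\cdot)$, and a disturbance $d_1(\cdot)$ producing a solution $x_1(\cdot)$, the idea is to build $d_2(\cdot)$ and $x_2(\cdot)$ pointwise-in-time so that $(x_1(t),x_2(t))\in\mathcal{R}$ for all $t$. At each instant the algebraic condition~(1) supplies, for the tangent $f_1(t)=\dot{x}_1(t)$ of the given first trajectory, a disturbance value $d_2(t)$ and a tangent $f_2(t)$ with $(f_1(t),f_2(t))\in\mathcal{R}$; the matching trajectory $x_2(\cdot)$ is then obtained by integrating, using Theorem~\ref{thm5} to guarantee that a genuine continuous, piecewise-differentiable solution exists and remains in $\mathcal{R}$. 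The main obstacle is precisely this converse direction: condition~(1) is a statement about tangent vectors at each point, and one must show that these infinitesimal choices can be assembled into an honest solution trajectory of $\Sigma_2$ that stays in $\mathcal{R}$ for \emph{all} $t\ge 0$, not merely that the derivative lies in $\mathcal{R}$ at a single instant. I expect this to rest on the observation that $\mathcal{R}$ itself inherits the structure of a consistent subspace of the product system $\Sigma_1\times\Sigma_2$, so that invariance of $\mathcal{R}$ under the coupled dynamics is exactly the controlled-invariance-type condition encoded in~\eqref{25}; the existence and containment of the trajectory then follow by applying Theorem~\ref{thm5} to this product DAE restricted to $\mathcal{R}$.
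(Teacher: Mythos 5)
Your first half (Definition~\ref{def1} $\Rightarrow$ Proposition~\ref{prop1}) is correct and is essentially the paper's own argument: use Theorem~\ref{thm5} (or its Corollary for systems with disturbances) to realize a prescribed consistent pair $(x_1,f_1)$ as the initial state and initial velocity of an actual trajectory, invoke the trajectory-level bisimulation property to get a matching trajectory of $\Sigma_2$, and differentiate the pair at $t=0$, using that $\mathcal{R}$ is a subspace so derivatives of curves lying in $\mathcal{R}$ remain in $\mathcal{R}$. Property (2) indeed requires no argument.

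The converse direction, however, contains a genuine gap which you partially acknowledge but do not close, and the tool you propose cannot close it. Theorem~\ref{thm5} is a purely pointwise existence statement: given consistent $x_0$ and $f$ with $Ef=Ax_0+Bu(0)$, it yields \emph{some} solution with that initial state and velocity; it says nothing about a solution remaining in a prescribed subspace for all $t\geq 0$. More fundamentally, the matching problem in Definition~\ref{def1} is not an existence problem for the product DAE: the trajectory $x_1(\cdot)$ is \emph{given} and arbitrary --- it carries the full nondeterminism of $\Sigma_1$, e.g.\ an arbitrary free component $z_1(\cdot)$ when the system is put in the form \eqref{9} --- so applying an existence theorem to the product system ``restricted to $\mathcal{R}$'' produces a joint solution whose first component need not coincide with the given $x_1(\cdot)$. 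What is actually required is a construction of $d_2(\cdot)$ (equivalently, of the free part of $x_2(\cdot)$) as a feedback of the current state pair and of the first system's disturbance, together with an argument that pointwise tangency to $\mathcal{R}$ integrates to invariance. The paper achieves this by transforming both systems to the special form \eqref{9} and rewriting every solution of $\Sigma_i$ as a solution of the \emph{ordinary} linear system with disturbance, cf.\ \eqref{28}--\eqref{29}, in which the algebraic freedom $z_i(t)$ appears as the integral of a new disturbance $e_i(\cdot)$; at that point the matching problem is exactly the one solved for ordinary linear systems by Proposition 2.9 of \cite{Arjan04a}, which supplies $e_2(t)$ as a function of $(x_1^a(t),z_1(t),x_2^a(t),z_2(t),e_1(t))$ and guarantees that the pair stays in $\mathcal{R}$ for all time. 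To complete your proof you would either have to carry out this reduction or reprove that ODE matching lemma; ``$\mathcal{R}$ inherits the structure of a consistent subspace of the product system, then apply Theorem~\ref{thm5}'' does not suffice.
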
 

\begin{proof} Properties (2) of Definition \ref{def1} and Proposition \ref{prop1}, cf. (\ref{24}) and (\ref{26}), are equal, so we only need to prove equivalence of Properties (1) of Definition \ref{def1} and Proposition \ref{prop1}. 
	
	In order to do this we will utilize the fact (as explained above) that the DAEs $E_i\dot{x}_i  = A_ix_i+B_iu_i+G_id_i, i=1,2,$ can be transformed, see \eqref{6}, to DAEs of the form $E_i\dot{x}_i  = A_ix_i+B_iu_i, i=1,2$, not containing disturbances. Hence it is sufficient to prove equivalence of Properties (1) of Definition \ref{def1} and Proposition \ref{prop1} for systems $\Sigma_1$ and $\Sigma_2$ of the form \eqref{7}. For clarity we will restate Property (1) in this simplified case briefly as follows:\\
	Property (1) of Definition \ref{def1}: For every solution $x_1(\cdot)$ of $\Sigma_1$ with $x_1(0)=x_1$ there exists a solution $x_2(\cdot)$ of $\Sigma_2$ with $x_2(0)=x_2$ such that \eqref{23} holds, and conversely.\\
	Property (1) of Proposition \ref{prop1}: For every $f_1 \in \mathcal{V}_1^*$ such that $E_1f_1=A_1x_1 + B_1u$ there exists $f_2 \in \mathcal{V}_2^*$ such that $E_2f_2=A_2x_2 + B_2u$ such that \eqref{25} holds, and conversely.
	
	'{\it Only if part}'.
	Take $u(\cdot)\in\mathfrak{U}$ and $(x_{1}, x_{2})\in\mathcal{R}$, and let $f_{1}\in\mathcal{V}_{1}^{*}$ be such that $E_{1}f_{1}=A_{1}x_{1}+ B_{1}u(0)$. According to Theorem \ref{thm5}, there exists a solution $x(\cdot)$ of $\Sigma_{1}$ such that $x_{1}(0)=x_{1}$ and $\dot{x}_{1}(0)=f_{1}$. Then, based on Property (1) of Definition \ref{def1}, there exists a solution $x_{2}(\cdot)$ of $\Sigma_{2}$ with $x_{2}(0)=x_{2}$ such that (\ref{23}) holds. By differentiating $x_{2}(t)$ with respect to $t$ and denoting $f_{2}:=\dot{x}_{2}(0)$, we obtain (\ref{25}). The same argument holds for the case where the indices $1$ and $2$ are interchanged.

	'{\it If part}'.
	Let $(x_1,x_2) \in \mathcal{R}$, $u(\cdot) \in \mathfrak{U}$. Consider any solution $x_{1}(\cdot)$ of $\Sigma_{1}$ corresponding to $x_{1}(0)=x_{1}$. Transform systems $\Sigma_1$ and $\Sigma_2$ into the form \eqref{9}. This means that $x_1(\cdot) = \begin{bmatrix} x^a_1(\cdot) \\ x^b_1(\cdot) \end{bmatrix}, t\geq 0,$ is a solution to
	\begin{equation}
	\label{27}
	\Sigma_{1}:\begin{array}{lllll}
	\dot{x}_{1}^{a}(t)= (A_{1}^{aa}+A_{1}^{ab}F_{1})x_{1}^{a}(t)+A_{1}^{ab}z_{1}(t)+B_{1}^{a}u(t), \; x_1^a(t) \in  \mathcal{W}_{1}\\ [2mm]
	x_{1}^{b}(t)=F_{1}x_{1}^{a}(t)+z_{1}(t), \quad z_{1}(t)\in\ker A^{bb}_{1}\cap (A_{1}^{ab})^{-1}\mathcal{W}_{1}, \; t\geq 0
	\end{array}
	\end{equation}
	Equivalently, $x^a_1(\cdot), t\geq 0,$ is a solution to
	\begin{equation}\label{28}
	\begin{array}{lllll}
	\dot{x}_{1}^{a}(t)= (A_{1}^{aa}+A_{1}^{ab}F_{1})x_{1}^{a}(t)+A_{1}^{ab}z_{1}(t)+B_{1}^{a}u(t), \; x_1^a(t) \in  \mathcal{W}_{1}\\ [2mm]
	\dot{z} _{1}(t) = e_1(t), \quad z_1(t) \in\ker A^{bb}_{1}\cap (A_{1}^{ab})^{-1}\mathcal{W}_{1} ,
	\end{array}
	\end{equation}
	where $e_1(\cdot)$ is a disturbance function, while additionally $x_{1}^{b}(t)=F_{1}x_{1}^{a}(t)+z_{1}(t), \, t \geq 0$.
	
	Similarly, the solutions $x_2(\cdot) = \begin{bmatrix} x^a_2(\cdot) \\ x^b_2(\cdot) \end{bmatrix}, t\geq 0,$ of $\Sigma_2$ are generated as solutions $x_2^a(\cdot)$ of
	\begin{equation}\label{29}
	\begin{array}{lllll}
	\dot{x}_{2}^{a}(t)= (A_{2}^{aa}+A_{1}^{ab}F_{2})x_{1}^{a}(t)+A_{2}^{ab}z_{1}(t)+B_{2}^{a}u(t), \; x_2^a(t) \in  \mathcal{W}_{2}\\ [2mm]
	\dot{z} _{2}(t) = e_2(t), \quad z_2(t) \in\ker A^{bb}_{2}\cap (A_{2}^{ab})^{-1}\mathcal{W}_{2} ,
	\end{array}
	\end{equation}
	where $e_2(\cdot)$ is a disturbance function, while additionally $x_{2}^{b}(t)=F_{2}x_{2}^{a}(t)+z_{2}(t), \, t \geq 0$.
	
	Now, the systems \eqref{28} and \eqref{29} with state vectors $\begin{bmatrix}x_1^a(t) \\z_1(t)\end{bmatrix}$, respectively $\begin{bmatrix}x_2^a(t) \\z_2(t)\end{bmatrix}$ are {\it ordinary} (no algebraic constraints) linear systems with disturbances $e_1$ and $e_2$, to which the bisimulation theory of \cite{Arjan04a} for ordinary linear systems applies. In particular, given the solution $x_1^a(\cdot), z_1(\cdot)$, and corresponding 'disturbance' $e_1(\cdot)$ by Proposition 2.9 in \cite{Arjan04a}, Property (1) in Proposition \ref{prop1} implies that there exists a disturbance $e_2(\cdot)$ with $e_2(t)=e_2(x_1^a(t), z_1(t),$ $x_2^a(t),z_2(t),e_1(t))$ such that the combined dynamics of $(x_1^a,z_1)$ and $(x_2^a,z_2)$ remain in $\mathcal{R}$. This implies Property (1) in Definition \ref{def1}.
	
The same argument holds for the case where the indices $1$ and $2$ are interchanged.
\end{proof}

The next step in the linear-algebraic characterization of bisimulation relations for linear DAE systems is provided in the following theorem.
	
	\begin{theorem}\label{thm1}
		A subspace $\mathcal{R}\subset\mathcal{X}_{1}\times\mathcal{X}_{2}$ is a bisimulation relation between $\Sigma_{1}$ and $\Sigma_{2}$ satisfying $\pi_{i}(\mathcal{R})\subset\mathcal{V}_{i}^{*},i=1,2,$ if and only if
		\begin{equation}
		\label{30}
		\begin{array}{rlllll}
		(a) & \mathcal{R}+\left[
		\begin{array}{cccc}
		E_{1}^{-1}(\Ima G_{1})\cap\mathcal{V}_{1}^{*}\\
		0
		\end{array}
		\right]=\mathcal{R}+\left[
		\begin{array}{cccc}
		0\\
		E_{2}^{-1}(\Ima G_{2})\cap\mathcal{V}_{2}^{*}
		\end{array}
		\right],\\[5mm]
		(b) & \left[
		\begin{array}{cccc}
		A_{1} & 0\\
		0 & A_{2}
		\end{array}
		\right]\mathcal{R}\subset\left[
		\begin{array}[rcl]{cccc}
		E_{1} & 0\\
		0 & E_{2}
		\end{array}
		\right]\mathcal{R}+\Ima\left[
		\begin{array}{cccc}
		G_{1} & 0\\0 & G_{2}
		\end{array}
		\right],\\[5mm]
		(c) & \Ima \left[
		\begin{array}{cccc}
		B_{1}\\
		B_{2}
		\end{array}
		\right]\subset\left[
		\begin{array}[rcl]{cccc}
		E_{1} & 0\\
		0 & E_{2}
		\end{array}
		\right]\mathcal{R}+\Ima\left[
		\begin{array}{cccc}
		G_{1} & 0\\0 & G_{2}
		\end{array}
		\right],\\[5mm]
		(d) & \mathcal{R} \subset \ker \left[
		C_{1}\vdots -C_{2}
		\right].
		\end{array}			
		\end{equation}
	\end{theorem}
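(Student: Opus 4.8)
The plan is to prove Theorem~\ref{thm1} by showing that conditions (a)--(d) are jointly equivalent to Properties (1) and (2) of Proposition~\ref{prop1}, which already characterize a bisimulation relation. Property (2), namely $C_1x_1 = C_2x_2$ for all $(x_1,x_2)\in\mathcal{R}$, is literally the inclusion $\mathcal{R}\subset\ker[C_1\ \vdots\ -C_2]$, so condition (d) is immediate; all the work lies in matching Property (1) with conditions (a), (b), (c).

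The key observation I would isolate first is the affine structure of the achievable derivatives. For fixed $(x_i,u)$ set
\[
\mathcal{F}_i(x_i,u) := \{f_i\in\mathcal{V}_i^* : E_i f_i - A_i x_i - B_i u\in\Ima G_i\}, \quad i=1,2,
\]
which are exactly the $f_i$ figuring in Property (1). Since $x_i\in\mathcal{V}_i^*$ and $\Ima B_i\subset E_i\mathcal{V}_i^*+\Ima G_i$, each $\mathcal{F}_i(x_i,u)$ is nonempty, and any two of its elements differ by a vector in $\mathcal{N}_i := E_i^{-1}(\Ima G_i)\cap\mathcal{V}_i^*$; hence $\mathcal{F}_i(x_i,u)$ is an affine subspace with linear part $\mathcal{N}_i$, precisely the subspace entering condition (a). I would then show that (b) and (c) encode the existence of one matched consistent pair: together they read, in the block-matrix notation of (b)--(c),
\[
\begin{bmatrix} A_1 & 0 \\ 0 & A_2 \end{bmatrix}\mathcal{R} + \Ima\begin{bmatrix} B_1 \\ B_2 \end{bmatrix} \subset \begin{bmatrix} E_1 & 0 \\ 0 & E_2 \end{bmatrix}\mathcal{R} + \Ima\begin{bmatrix} G_1 & 0 \\ 0 & G_2 \end{bmatrix},
\]
which says that for every $(x_1,x_2)\in\mathcal{R}$ and every $u$ there is a pair $(f_1^0,f_2^0)\in\mathcal{R}$ with $f_i^0\in\mathcal{F}_i(x_i,u)$. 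Conversely, evaluating Property (1) (which yields such a pair because $\mathcal{F}_1$ is nonempty) at arbitrary $(x_1,x_2)\in\mathcal{R}$, $u$ gives this inclusion, and splitting off the $u=0$ part and the $(x_1,x_2)=0$ part recovers (b) and (c) separately.

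With a base pair $(f_1^0,f_2^0)\in\mathcal{R}$ in hand, the remaining content of Property (1) reduces to condition (a). Writing $f_1 = f_1^0 + n_1$ with $n_1\in\mathcal{N}_1$, the requirement ``there exists $f_2=f_2^0+n_2\in\mathcal{F}_2$ with $(f_1,f_2)\in\mathcal{R}$'' becomes, after subtracting $(f_1^0,f_2^0)\in\mathcal{R}$, ``there exists $n_2\in\mathcal{N}_2$ with $(n_1,n_2)\in\mathcal{R}$'', i.e.\ $\mathcal{N}_1\times\{0\}\subset\mathcal{R}+\{0\}\times\mathcal{N}_2$. Adding $\mathcal{R}$ to both sides turns this into $\mathcal{R}+(\mathcal{N}_1\times\{0\})\subset\mathcal{R}+(\{0\}\times\mathcal{N}_2)$, and the symmetric (converse) matching gives the reverse inclusion; together they are exactly condition (a). For the ``if'' direction one runs this backwards: given (a)--(c), extract the base pair from (b)+(c), use (a) to produce $n_2$ for each $n_1$, and set $f_2=f_2^0+n_2$. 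That condition (a) is genuinely forced can be seen cleanly by applying Property (1) at $(x_1,x_2,u)=(0,0,0)$, where $\mathcal{F}_i(0,0)=\mathcal{N}_i$ and the base pair is $(0,0)$.

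The main obstacle I anticipate is the logical ordering of these two ingredients: condition (a), being a statement only about the linear parts $\mathcal{N}_i$, cannot match an arbitrary achievable $f_1$ until a common base point $(f_1^0,f_2^0)\in\mathcal{R}$ has been supplied by (b) and (c). Getting this interplay right --- existence of a consistent matched pair first, then matching of the affine fibres --- and verifying that the nonemptiness of the $\mathcal{F}_i$ needed for the ``only if'' direction really follows from $\pi_i(\mathcal{R})\subset\mathcal{V}_i^*$ together with $\Ima B_i\subset E_i\mathcal{V}_i^*+\Ima G_i$, are the delicate points; the remainder is routine bookkeeping with sums and set-theoretic preimages of subspaces.
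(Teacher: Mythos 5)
Your proposal is correct and takes essentially the same route as the paper's own proof: both reduce the theorem to Proposition \ref{prop1}, obtain from (\ref{30}b,c) a matched base pair $(f_1^0,f_2^0)\in\mathcal{R}$ solving the joint dynamics, use (\ref{30}a) to match the affine fibres of achievable derivatives (whose common linear parts are $E_i^{-1}(\Ima G_i)\cap\mathcal{V}_i^*$), and recover (\ref{30}a) in the converse direction by evaluating Property (1) at $(x_1,x_2,u)=(0,0,0)$. The only step you leave implicit, which the paper states explicitly, is that in the ``if'' direction the inclusion $\pi_i(\mathcal{R})\subset\mathcal{V}_i^*$ must first be deduced from (\ref{30}b,c) --- via maximality of the consistent subsets --- before Proposition \ref{prop1} and the nonemptiness of your fibres $\mathcal{F}_i(x_i,u)$ can be invoked at all.
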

	\begin{proof}
		\textit{'If part'}. Condition (\ref{26}) of Proposition \ref{prop1} follows trivially from condition (\ref{30}d). 
		From (\ref{30}b,c) it follows that for every $(x_{1},x_{2})\in\mathcal{R}$ and $u\in\mathcal{U}$ there exist $(f_{1},f_{2})\in\mathcal{R}$, and $d_{1}\in\mathcal{D}_{1}$, $d_{2}\in\mathcal{D}_{2}$, such that
		\begin{equation}
		\label{31}
		\begin{array}{rlllll}
		\left[
		\begin{array}{cccc}
		E_{1} & 0\\
		0 & E_{2}
		\end{array}
		\right]\left[
		\begin{array}{cccc}
		f_{1}\\
		f_{2}
		\end{array}
		\right]&=&\left[
		\begin{array}{cccc}
		A_{1} & 0\\
		0 & A_{2}
		\end{array}
		\right]\left[
		\begin{array}{cccc}
		x_{1}\\
		x_{2}
		\end{array}
		\right]+\left[
		\begin{array}{cccc}
		B_{1}\\
		B_{2}
		\end{array}
		\right]u+\left[
		\begin{array}{cccc}
		G_{1}\\
		0
		\end{array}
		\right]d_{1}\\[5mm]
		&+&\left[
		\begin{array}{cccc}
		0\\
		G_{2}
		\end{array}
		\right]d_{2}.
		\end{array}
		\end{equation}
		This implies $\pi_{i}(\mathcal{R}) \subset \mathcal{V}^{*}_{i}, i=1,2$. 
		
		Now let $(x_{1},x_{2})\in\mathcal{R}$ and $u\in\mathcal{U}$. Then as above, by (\ref{30}b,c), there exist $(f_{1},f_{2})\in\mathcal{R}$, and $d_{1}\in\mathcal{D}_{1}$, $d_{2}\in\mathcal{D}_{2}$ such that \eqref{31} holds. Now consider any $f_1' \in \mathcal{V}_1^*$ and $d_1' \in \mathcal{D}_1$ such that $E_{1}f'_{1}=A_{1}x_{1}+B_{1}u+G_{1}d'_{1}$. Then $f_1' = f_1 + v_1$ for some $v_1 \in E_{1}^{-1}(\Ima G_{1})\cap\mathcal{V}_{1}^{*}$. Hence by (\ref{30}a) there exists $v_2 \in E_{2}^{-1}(\Ima G_{2})\cap\mathcal{V}_{2}^{*}$ and $(f''_{1},f''_{2})\in\mathcal{R}$ such that
		\[
		\begin{bmatrix} v_1 \\ 0 \end{bmatrix} = \begin{bmatrix} f''_{1} \\f''_{2} \end{bmatrix} - \begin{bmatrix} 0 \\v_2 \end{bmatrix}
		\]
		with $E_2v_2=G_2d''_2$ for some $d''_2 \in \mathcal{D}_2$. Therefore
		\[
		\begin{bmatrix} f'_1 \\ f_2 \end{bmatrix}  =  \begin{bmatrix} f_1 \\ f_2 \end{bmatrix} + \begin{bmatrix} v_1 \\ 0 \end{bmatrix} 
		= \begin{bmatrix} f_1 \\ f_2 \end{bmatrix} +  \begin{bmatrix} f''_1 \\ f''_2 \end{bmatrix} -  \begin{bmatrix} 0 \\v_2 \end{bmatrix} 
		=  \begin{bmatrix} f'_1 \\ f'_2 \end{bmatrix}  -  \begin{bmatrix} 0 \\ v_2 \end{bmatrix},
		\]
		with $f_2' := f_2 + f''_2$. Clearly $(f'_1, f'_2) \in \mathcal{R}$. It follows that
		\[
		E_2 f'_2 = E_2f_2 + E_2v_2 = A_{2}x_{2}+B_{2}u+ G_{2}d'_{2},
		\]
		with $d'_2:=d_2 + d''_2$. Similarly, for every $f'_2 \in \mathcal{V}_2^*$ and $d_2' \in \mathcal{D}_2$ such that $E_{2}f'_{2}=A_{2}x_{2}+B_{2}u+G_{2}d'_{2}$ there exist $f'_1 \in \mathcal{V}_1^*$ with $(f'_1, f'_2) \in \mathcal{R}$, while $E_1 f'_1 = A_{1}x_{1}+B_{1}u+ G_{1}d'_{1}$ for some $d'_1:=d_1 + d''_1$. Hence we have shown property (1) of Proposition \ref{prop1}.

		\textit{'Only if part'}. Property (2) of Proposition \ref{prop1} is trivially equivalent with (\ref{30}d). Since $\pi_{i}(\mathcal{R})\subset\mathcal{V}_{i}^{*}$ for $i=1,2$ we have
		\begin{equation}
		\label{32}
		\left[
		\begin{array}{cccc}
		A_{1} & 0\\
		0 & A_{2}
		\end{array}
		\right]\mathcal{R}\subset\left[
		\begin{array}{cccc}
		E_{1} & 0\\
		0 & E_{2}
		\end{array}
		\right]\mathcal{R}+\Ima\left[
		\begin{array}{cccc}
		G_{1}&0\\ 0 & G_{2}
		\end{array}
		\right]
		\end{equation} 
		and
		\begin{equation}
		\label{33}
		\begin{array}{rlllll}
		\Ima\left[
		\begin{array}{cccc}
		B_{1}\\
		B_{2}
		\end{array}
		\right]&\subset&\left[
		\begin{array}{cccc}
		E_{1} & 0\\
		0 & E_{2}
		\end{array}
		\right]\mathcal{R}+\Ima\left[
		\begin{array}{cccc}
		G_{1}&0\\0 & G_{2}
		\end{array}
		\right].
		\end{array}
		\end{equation}
		Furthermore, since property (1) of Proposition \ref{prop1} holds, by taking $(x_1,x_2)=(0,0)$ and $u=0$, then for every $d_{1}$ for which there exists $f_{1}\in\mathcal{V}_{1}^{*}$ such that $E_{1}f_{1}=G_{1}d_{1}$, there exists $d_{2}$ and $f_{2}\in\mathcal{V}_{2}^{*}$ such that $E_{2}f_{2}=G_{2}d_{2}$, while $(f_{1},f_{2})\in\mathcal{R}$. Hence
		\begin{equation}
		\label{34}
		\left[
		\begin{array}{cccc}
		f_{1}\\0
		\end{array}
		\right]=\left[
		\begin{array}{cccc}
		f_{1}\\f_{2}
		\end{array}
		\right]-\left[
		\begin{array}{cccc}
		0\\f_{2}
		\end{array}
		\right]\in\mathcal{R}+\left[
		\begin{array}{cccc}
		0\\E_{2}^{-1}(\Ima G_{2})\cap\mathcal{V}_{2}^{*}
		\end{array}
		\right],
		\end{equation}
		and thus
		\begin{equation}
		\label{35}
		\left[
		\begin{array}{cccc}
		E_{1}^{-1}(\Ima G_{1})\cap\mathcal{V}_{1}^{*}\\
		0
		\end{array}
		\right]\subset\mathcal{R}+\left[
		\begin{array}{cccc}
		0\\
		E_{2}^{-1}(\Ima G_{2})\cap \mathcal{V}_{2}^{*}
		\end{array}
		\right].
		\end{equation} 
		Similarly one obtains 
		\begin{equation}
		\label{36}
		\left[
		\begin{array}{cccc}
		0\\
		E_{2}^{-1}(\Ima G_{2})\cap\mathcal{V}_{2}^{*}
		\end{array}
		\right]\subset\mathcal{R}+\left[
		\begin{array}{cccc}
		E_{1}^{-1}(\Ima G_{1})\cap\mathcal{V}_{1}^{*}\\
		0	
		\end{array}
		\right]
		\end{equation}
		Combining equations (\ref{35}) and (\ref{36}) implies condition (\ref{30}a). 
	\end{proof}
	\begin{remark}
		In the special case $E_{i}, i=1,2,$ equal to the identity matrix, it follows that $\mathcal{V}_i^* = \mathcal{X}_i, i=1,2$, and \eqref{30} reduces to
		\begin{equation}
		\label{37}
		\begin{array}{rlllll}
		(a) & \mathcal{R}+\left[
		\begin{array}{cccc}
		\Ima G_{1}\\
		0
		\end{array}
		\right]
		=\mathcal{R}+
		\left[
		\begin{array}{cccc}
		0\\
		\Ima G_{2}
		\end{array}
		\right] =: \mathcal{R}_e,
		\\[5mm]
		(b) & \left[
		\begin{array}{cccc}
		A_{1} & 0\\
		0 & A_{2}
		\end{array}
		\right]\mathcal{R}\subset
		\mathcal{R}+\Ima\left[
		\begin{array}{cccc}
		G_{1} & 0\\0 & G_{2} 
		\end{array}
		\right],\\[5mm]
		(c) & \Ima \left[
		\begin{array}{cccc}
		B_{1}\\
		B_{2}
		\end{array}
		\right]\subset
		\mathcal{R}+\Ima\left[
		\begin{array}{cccc}
		G_{1} & 0\\0 & G_{2}
		\end{array}
		\right],\\[5mm]
		(d) & \mathcal{R} \subset \ker \left[
		C_{1}\vdots -C_{2}
		\right].
		\end{array}			
		\end{equation}
		Hence in this case Theorem \ref{thm1} reduces to \cite[Theorem 2.10]{Arjan04a}.
	\end{remark}
	
	\subsection{Computing the maximal bisimulation relation}
	The {\it maximal} bisimulation relation between two DAE systems, denoted $\mathcal{R}^{max}$, can be computed, whenever it exists, in the following way, similarly to the well-known algorithm \cite{Won74,Bas92} from geometric control theory to compute the {\it maximal controlled invariant subspace}. For notational convenience define
	\begin{equation}
	\label{38}
	\begin{array}{lllllll}
	E^{\times}:=\left[
	\begin{array}{cccc}
	E_{1} & 0\\
	0 & E_{2}
	\end{array}
	\right], & A^{\times}:=\left[
	\begin{array}{cccc}
	A_{1} & 0\\
	0 & A_{2}
	\end{array}
	\right],& C^{\times}:=
	\left[ C_{1}\vdots -C_{2} \right]
	,\\[5mm]
	\mathcal{G}_{1}^{\times}:=\left[
	\begin{array}{ccccc}
	E_{1}^{-1}(\Ima G_{1})\cap\mathcal{V}_{1}^{*}\\
	0
	\end{array}
	\right], &
	\mathcal{G}_{2}^{\times}:=\left[
	\begin{array}{ccc}
	0\\
	E_{2}^{-1}(\Ima G_{2})\cap \mathcal{V}_{2}^{*}
	\end{array}
	\right], & \bar{G}^{\times}:=\left[
	\begin{array}{cccc}
	G_{1} & 0\\
	0 & G_{2}
	\end{array}
	\right].
	\end{array}
	\end{equation}
	\begin{algorithm}\label{alg1}
		Given two systems $\Sigma_{1}$ and $\Sigma_{2}$. Define the following sequence $\mathcal{R}^{j}, j=0,1,2,...$, of subsets of $\mathcal{X}_{1}\times\mathcal{X}_{2}$
		\begin{equation}
		\label{39}
		\begin{array}{rlllllll}
		\mathcal{R}^{0} &= \mathcal{X}_{1}\times\mathcal{X}_{2},\\
		\mathcal{R}^{1} &= \{z\in\mathcal{R}^{0} \mid z\in\ker C^{\times},\mathcal{R}^{1}+\mathcal{G}_{1}^{\times}=\mathcal{R}^{1}+\mathcal{G}_{2}^{\times}\},\\[5mm]
		\mathcal{R}^{2} &= \{z\in\mathcal{R}^{1} \mid A^{\times}z\subset E^{\times}\mathcal{R}^{1}+\Ima \bar{G}^{\times},\mathcal{R}^{2}+\mathcal{G}_{1}^{\times}=\mathcal{R}^{2}+\mathcal{G}_{2}^{\times}\},\\
		&\vdots\\
		\mathcal{R}^{j} &= \{z\in\mathcal{R}^{j-1} \mid A^{\times}z+\subset E^{\times}\mathcal{R}^{j-1}+\Ima \bar{G}^{\times},\mathcal{R}^{j}+\mathcal{G}_{1}^{\times}=\mathcal{R}^{j}+\mathcal{G}_{2}^{\times} \}.
		\end{array}
		\end{equation}
	\end{algorithm}
	\begin{proposition}\label{prop2}
		The sequence $\mathcal{R}^{0}, \mathcal{R}^{1},...,\mathcal{R}^{j},...$ satisfies the following properties.
		\begin{enumerate}
			\item $\mathcal{R}^{j}, j\neq 0$, is a linear space or empty. Furthermore $\mathcal{R}^{0}\supset\mathcal{R}^{1}\supset\mathcal{R}^{2}\supset\cdots\supset\mathcal{R}^{j}\supset\mathcal{R}^{j+1}\supset\cdots$.
			\item There exists a finite $k$ such that $\mathcal{R}^{k}=\mathcal{R}^{k+1}=:\mathcal{R}^{*}$, and then $\mathcal{R}^{j}=\mathcal{R}^{*}$ for all $j\neq k$.
			\item $\mathcal{R}^{*}$ is either empty or equals the maximal subspace of $\mathcal{X}_{1}\times\mathcal{X}_{2}$ satisfying the properties
			\begin{equation}
			\label{40}
			\begin{array}{rlllll}
			(i) & \mathcal{R}^{*}+\left[
			\begin{array}{cccc}
			E_{1}^{-1}(\Ima G_{1})\cap\mathcal{V}_{1}^{*}\\
			0
			\end{array}
			\right]=\mathcal{R}^{*}+\left[
			\begin{array}{cccc}
			0\\
			E_{2}^{-1}(\Ima G_{2})\cap\mathcal{V}_{2}^{*}
			\end{array}
			\right],\\[5mm]
			(ii) & \left[
			\begin{array}{cccc}
			A_{1} & 0\\
			0 & A_{2}
			\end{array}
			\right]\mathcal{R}^{*}\subset\left[
			\begin{array}[rcl]{cccc}
			E_{1} & 0\\
			0 & E_{2}
			\end{array}
			\right]\mathcal{R}^{*}+\Ima\left[
			\begin{array}{cccc}
			G_{1} & 0\\ 0 & G_{2}
			\end{array}
			\right],\\[5mm]
			(iii) & \mathcal{R}^{*} \subset 
			\ker\left[
			C_{1}\vdots -C_{2}
			\right].
			\end{array}			
			\end{equation}
		\end{enumerate}
	\end{proposition}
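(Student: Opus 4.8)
The plan is to treat the three assertions in turn, after first making the self-referential definition \eqref{39} precise. The crucial reading of the $j$-th step is as follows: set
\[
\mathcal{S}^{j} := \{z \in \mathcal{R}^{j-1} \mid A^{\times} z \subset E^{\times}\mathcal{R}^{j-1} + \Ima \bar{G}^{\times}\}, \qquad \mathcal{S}^{1} := \mathcal{R}^{0} \cap \ker C^{\times},
\]
which is an honest subspace whenever $\mathcal{R}^{j-1}$ is (being the intersection of $\mathcal{R}^{j-1}$ with the linear preimage $(A^{\times})^{-1}(E^{\times}\mathcal{R}^{j-1}+\Ima\bar{G}^{\times})$), and then define $\mathcal{R}^{j}$ to be the \emph{largest} subspace $\mathcal{R}\subset\mathcal{S}^{j}$ satisfying the balance condition $\mathcal{R}+\mathcal{G}_{1}^{\times}=\mathcal{R}+\mathcal{G}_{2}^{\times}$, or empty when no such subspace exists. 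Everything hinges on showing that this largest balanced subspace is well defined.

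The key lemma, which I expect to be the main obstacle (or at least the only nonroutine step), is that balanced subspaces are closed under addition: if $\mathcal{R}',\mathcal{R}''$ both satisfy $\mathcal{R}+\mathcal{G}_{1}^{\times}=\mathcal{R}+\mathcal{G}_{2}^{\times}$, then so does $\mathcal{R}'+\mathcal{R}''$. This follows from the one-line computation
\[
(\mathcal{R}'+\mathcal{R}'')+\mathcal{G}_{1}^{\times}=\mathcal{R}''+(\mathcal{R}'+\mathcal{G}_{1}^{\times})=\mathcal{R}''+(\mathcal{R}'+\mathcal{G}_{2}^{\times})=(\mathcal{R}'+\mathcal{R}'')+\mathcal{G}_{2}^{\times},
\]
using only commutativity and associativity of the subspace sum together with the balance of $\mathcal{R}'$. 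Consequently, within the fixed subspace $\mathcal{S}^{j}$ the family of balanced subspaces, if non-empty, is closed under sums and hence contains a unique maximal element (the sum of all its members, which terminates since the dimension is finite); if that family is empty we declare $\mathcal{R}^{j}$ empty. This proves that each $\mathcal{R}^{j}$, $j\geq 1$, is a subspace or empty, and the inclusion $\mathcal{R}^{j}\subset\mathcal{R}^{j-1}$ is immediate from $\mathcal{R}^{j}\subset\mathcal{S}^{j}\subset\mathcal{R}^{j-1}$, which is statement (1).

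For statement (2), observe that the passage $\mathcal{R}^{j-1}\mapsto\mathcal{R}^{j}$ is a deterministic map $\Phi$, i.e. $\mathcal{R}^{j}=\Phi(\mathcal{R}^{j-1})$. The chain $\mathcal{R}^{0}\supset\mathcal{R}^{1}\supset\cdots$ is a decreasing chain of subspaces of the finite-dimensional space $\mathcal{X}_{1}\times\mathcal{X}_{2}$ (the empty case being absorbing), so the dimensions form a non-increasing sequence of non-negative integers and must stabilize: there is a finite $k$ with $\mathcal{R}^{k}=\mathcal{R}^{k+1}=:\mathcal{R}^{*}$. Applying $\Phi$ and using $\mathcal{R}^{k}=\mathcal{R}^{k+1}$ gives $\mathcal{R}^{k+1}=\Phi(\mathcal{R}^{k})=\Phi(\mathcal{R}^{k+1})=\mathcal{R}^{k+2}$, and by induction $\mathcal{R}^{j}=\mathcal{R}^{*}$ for all $j\geq k$.

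Finally, for statement (3) I would argue both inclusions. That $\mathcal{R}^{*}$ itself satisfies \eqref{40} is read off from $\mathcal{R}^{*}=\Phi(\mathcal{R}^{*})$: condition (i) is exactly the balance of $\mathcal{R}^{*}$; condition (ii) holds because $\mathcal{R}^{*}=\mathcal{R}^{k+1}\subset\mathcal{S}^{k+1}$ forces $A^{\times}\mathcal{R}^{*}\subset E^{\times}\mathcal{R}^{k}+\Ima\bar{G}^{\times}=E^{\times}\mathcal{R}^{*}+\Ima\bar{G}^{\times}$; and condition (iii) holds since $\mathcal{R}^{*}\subset\mathcal{R}^{1}\subset\ker C^{\times}$. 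For maximality, let $\mathcal{R}$ be any subspace satisfying \eqref{40}; I would show $\mathcal{R}\subset\mathcal{R}^{j}$ for all $j$ by induction. The base case $\mathcal{R}\subset\mathcal{R}^{0}$ is trivial, and $\mathcal{R}\subset\ker C^{\times}$ together with balance gives $\mathcal{R}\subset\mathcal{R}^{1}$. For the step, if $\mathcal{R}\subset\mathcal{R}^{j-1}$ then (ii) yields $A^{\times}\mathcal{R}\subset E^{\times}\mathcal{R}+\Ima\bar{G}^{\times}\subset E^{\times}\mathcal{R}^{j-1}+\Ima\bar{G}^{\times}$, so $\mathcal{R}\subset\mathcal{S}^{j}$; since $\mathcal{R}$ is balanced it is a balanced subspace of $\mathcal{S}^{j}$ and hence contained in the maximal one, $\mathcal{R}^{j}$. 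Passing to $j\geq k$ gives $\mathcal{R}\subset\mathcal{R}^{*}$. This argument simultaneously settles the dichotomy: any such $\mathcal{R}$ witnesses non-emptiness of the balanced families at every stage, so $\mathcal{R}^{*}$ is non-empty precisely when some subspace satisfies \eqref{40}, and empty otherwise.
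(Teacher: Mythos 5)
Your proof is correct, but it is far more self-contained than the paper's, which consists of the single line ``Analogous to the proof of \cite[Theorem 3.4]{Arjan04a}'' and thus defers entirely to the corresponding result for ordinary linear systems ($E=I$). In outline you follow the same standard route that citation points to: a decreasing chain of subspaces, stabilization in finitely many steps by finite dimensionality, verification that the fixed point satisfies the one-step conditions, and maximality via induction on $j$. What you add --- and what neither the paper nor its one-line citation spells out --- is a resolution of the self-referential nature of \eqref{39}: each $\mathcal{R}^{j}$ appears inside its own defining condition $\mathcal{R}^{j}+\mathcal{G}_{1}^{\times}=\mathcal{R}^{j}+\mathcal{G}_{2}^{\times}$, so it is not a priori clear that the algorithm defines anything at all. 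Your lemma that the sum of a balanced subspace with an arbitrary subspace is again balanced (which, as you note, needs balance of only one summand) makes the ``largest balanced subspace of $\mathcal{S}^{j}$'' well defined whenever any balanced subspace exists, and certifiably nonexistent otherwise; the same lemma then powers the maximality induction in part (3), since any subspace satisfying \eqref{40} witnesses non-emptiness of the balanced family at every stage and is therefore trapped inside each $\mathcal{R}^{j}$, hence inside $\mathcal{R}^{*}$. One cosmetic point: the paper's statement (2) reads $\mathcal{R}^{j}=\mathcal{R}^{*}$ for all $j\neq k$, an evident typo for $j\geq k$, which you silently and correctly repair.
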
 
	\begin{proof}
		Analogous to the proof of \cite[Theorem 3.4]{Arjan04a}.
	\end{proof}
	If $\mathcal{R}^{*}$ as obtained from Algorithm \ref{alg1} is non-empty and satisfies condition (\ref{30}c) in Theorem \ref{thm1}, then it follows that $\mathcal{R}^{*}$ is the \textit{maximal bisimulation relation} $\mathcal{R}^{max}$ between $\Sigma_{1}$ and $\Sigma_{2}$, while if $\mathcal{R}^{*}$ is empty or does not satisfy condition (\ref{30}c) in Theorem \ref{thm1} then there does not exist any bisimulation relation between $\Sigma_{1}$ and $\Sigma_{2}$. 
	
	Furthermore two systems are called {\it bisimilar} if there exists a bisimulation relation relating {\it all} states. This is formalized in the following definition and corollary.	
	
	\begin{definition}\label{def2}
		Two systems $\Sigma_{1}$ and $\Sigma_{2}$ as in (\ref{22}) are \textit{bisimilar}, denoted $\Sigma_{1}\sim\Sigma_{2}$, if there exists a bisimulation relation $\mathcal{R}\subset\mathcal{X}_{1}\times\mathcal{X}_{2}$ with the property that 
		\begin{equation}
		\label{41}
		\begin{array}{ccccc}
		\pi_{1}(\mathcal{R})=\mathcal{V}_{1}^{*}, & \pi_{2}(\mathcal{R})=\mathcal{V}_{2}^{*},
		\end{array}
		\end{equation}
		where $\mathcal{V}_{i}^{*}$ is the consistent subset of $\Sigma_{i},i=1,2$.
	\end{definition}
	
	\begin{corollary}\label{cor1}
		$\Sigma_{1}$ and $\Sigma_{2}$ are bisimilar if and only if $\mathcal{R}^{*}$ is non-empty and satisfies condition (\ref{30}c) in Theorem \ref{thm1} and equation (\ref{41}).
	\end{corollary}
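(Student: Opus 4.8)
The plan is to read off the corollary directly from Theorem \ref{thm1} and the maximality statement of Proposition \ref{prop2}, with careful attention to the special role of condition (\ref{30}c). The crucial bookkeeping point is that Proposition \ref{prop2} certifies $\mathcal{R}^*$ only as the \emph{maximal} subspace satisfying conditions (\ref{40})(i),(ii),(iii), which are exactly conditions (\ref{30}a), (\ref{30}b), (\ref{30}d) of Theorem \ref{thm1}; the remaining condition (\ref{30}c) is not enforced by Algorithm \ref{alg1} and must be imposed by hand. Consequently $\mathcal{R}^*$ is a bisimulation relation in the sense of Theorem \ref{thm1} precisely when it additionally satisfies (\ref{30}c), which is exactly why (\ref{30}c) is singled out in the statement.

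For the \textit{if} direction, I would assume $\mathcal{R}^*$ is non-empty, satisfies (\ref{30}c), and satisfies (\ref{41}). By Proposition \ref{prop2} it satisfies (\ref{30}a), (\ref{30}b), (\ref{30}d), so together with the assumed (\ref{30}c) all four conditions of Theorem \ref{thm1} hold. I would then reuse the argument from the \textit{if part} of the proof of Theorem \ref{thm1}, where (\ref{30}b) and (\ref{30}c) together force $\pi_i(\mathcal{R}^*)\subset\mathcal{V}_i^*$ for $i=1,2$, thereby verifying the standing hypothesis of Theorem \ref{thm1}. Hence $\mathcal{R}^*$ is a bisimulation relation, and since it also satisfies (\ref{41}) it realizes bisimilarity in the sense of Definition \ref{def2}, giving $\Sigma_1\sim\Sigma_2$.

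For the \textit{only if} direction, suppose $\Sigma_1\sim\Sigma_2$, so by Definition \ref{def2} there is a bisimulation relation $\mathcal{R}$ with $\pi_i(\mathcal{R})=\mathcal{V}_i^*$. By Theorem \ref{thm1}, $\mathcal{R}$ satisfies (\ref{30}a)--(\ref{30}d); in particular it satisfies the three defining conditions of $\mathcal{R}^*$, so maximality in Proposition \ref{prop2} gives $\mathcal{R}\subset\mathcal{R}^*$, whence $\mathcal{R}^*$ is non-empty. Condition (\ref{30}c) for $\mathcal{R}^*$ then follows from $\Ima\bigl[\begin{smallmatrix}B_1\\B_2\end{smallmatrix}\bigr]\subset E^{\times}\mathcal{R}+\Ima\bar{G}^{\times}\subset E^{\times}\mathcal{R}^*+\Ima\bar{G}^{\times}$, using $\mathcal{R}\subset\mathcal{R}^*$. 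The only substantive remaining step is (\ref{41}): the inclusion $\mathcal{V}_i^*=\pi_i(\mathcal{R})\subset\pi_i(\mathcal{R}^*)$ is immediate from $\mathcal{R}\subset\mathcal{R}^*$, while the reverse inclusion $\pi_i(\mathcal{R}^*)\subset\mathcal{V}_i^*$ comes from applying the projection argument of Theorem \ref{thm1} to $\mathcal{R}^*$, now that (\ref{30}b) and (\ref{30}c) are both available for it. I expect this reverse inclusion to be the main obstacle, since it is not supplied by the algorithm and hinges entirely on having first established (\ref{30}c) for $\mathcal{R}^*$; this dependence is what makes (\ref{30}c) indispensable in both directions.
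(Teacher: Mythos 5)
Your proof is correct and follows essentially the same route the paper intends: the paper gives no separate proof of Corollary \ref{cor1}, asserting it on the basis of the preceding paragraph (non-emptiness of $\mathcal{R}^*$ plus condition (\ref{30}c) makes $\mathcal{R}^*$ the maximal bisimulation relation, via Proposition \ref{prop2} and Theorem \ref{thm1}, and then Definition \ref{def2} requires (\ref{41})). Your write-up simply makes that implicit argument explicit, including the correct observation that (\ref{30}c) is not enforced by Algorithm \ref{alg1} and that $\pi_i(\mathcal{R}^*)\subset\mathcal{V}_i^*$ must be recovered from the projection argument in the proof of Theorem \ref{thm1}.
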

	Bisimilarity is implying the {\it equality of external behavior}. Consider two systems $\Sigma_{i}, i=1,2,$ as in (\ref{22}), with external behavior $\mathcal{B}_{i}$ defined as
	\begin{equation}
	\label{42}
	\mathcal{B}_{i}:=\{(u_{i}(\cdot), y_{i}(\cdot))\mid\exists x_{i}(\cdot), d_{i}(\cdot) \text{ such that } (\ref{22}) \text{ is satisfied}\}.
	\end{equation}
	Analogously to \cite{Arjan04a} we have the following result.
	\begin{proposition}\label{prop3}
		Let $\Sigma_{i}, i=1,2,$ be bisimilar. Then their external behaviors $\mathcal{B}_{i}$ are equal.
	\end{proposition}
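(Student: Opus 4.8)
The plan is to exploit the symmetry of bisimulation and reduce the equality $\mathcal{B}_1 = \mathcal{B}_2$ to the single inclusion $\mathcal{B}_1 \subseteq \mathcal{B}_2$; the reverse inclusion then follows verbatim by interchanging the roles of the indices $1$ and $2$, since both Definition \ref{def1} and the bisimilarity condition \eqref{41} are symmetric in the two systems.

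First I would take an arbitrary pair $(u(\cdot), y(\cdot)) \in \mathcal{B}_1$. By the definition \eqref{42} of external behavior there exist a disturbance $d_1(\cdot)$ and a state trajectory $x_1(\cdot)$ of $\Sigma_1$ satisfying $E_1 \dot{x}_1 = A_1 x_1 + B_1 u + G_1 d_1$ and $y = C_1 x_1$. Since every admissible solution trajectory of $\Sigma_1$ evolves within the consistent subset, in particular $x_1(0) \in \mathcal{V}_1^*$. Invoking the bisimilarity hypothesis $\pi_1(\mathcal{R}) = \mathcal{V}_1^*$ from \eqref{41}, I can select an initial state $x_2(0) \in \mathcal{V}_2^*$ of $\Sigma_2$ with $(x_1(0), x_2(0)) \in \mathcal{R}$.

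Next I would apply Property (1) of Definition \ref{def1}: for the disturbance $d_1(\cdot)$ and the corresponding solution $x_1(\cdot)$, there exists a disturbance $d_2(\cdot)$ yielding a solution $x_2(\cdot)$ of $\Sigma_2$ with the chosen $x_2(0)$ such that $(x_1(t), x_2(t)) \in \mathcal{R}$ for all $t \geq 0$. Evaluating the output condition \eqref{24} of Property (2) at each time $t$ then gives $y(t) = C_1 x_1(t) = C_2 x_2(t)$. Thus $x_2(\cdot)$, together with the common input $u(\cdot)$ and the disturbance $d_2(\cdot)$, realizes precisely the output $y(\cdot)$ in $\Sigma_2$, so $(u(\cdot), y(\cdot)) \in \mathcal{B}_2$. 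This establishes $\mathcal{B}_1 \subseteq \mathcal{B}_2$, and the symmetric argument gives $\mathcal{B}_2 \subseteq \mathcal{B}_1$, hence equality.

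There is no genuine obstacle here, as the statement is essentially a transcription of the bisimulation definition into behavioral language. The only point requiring care is the matching of initial conditions: one must know that the starting state $x_1(0)$ of every admissible trajectory of $\Sigma_1$ lies in $\pi_1(\mathcal{R})$, which is exactly what the surjectivity condition \eqref{41} of Definition \ref{def2} guarantees. Without this full surjectivity onto the consistent subsets one would obtain only equality of the behaviors generated by states related through $\mathcal{R}$, rather than equality of the complete external behaviors.
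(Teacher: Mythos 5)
Your proof is correct and is essentially the argument the paper delegates to the analogous result in \cite{Arjan04a}: pick a related initial state using $\pi_1(\mathcal{R})=\mathcal{V}_1^*$ together with the fact (the corollary following Theorem \ref{thm5}) that every solution of $\Sigma_1$ starts in $\mathcal{V}_1^*$ (bisimilarity forces $\mathcal{V}_i^*$ to be nonempty subspaces, so that corollary indeed applies), then propagate along $\mathcal{R}$ by Property (1) of Definition \ref{def1} and read off output equality from Property (2), with symmetry giving the reverse inclusion. Your closing remark also correctly identifies the surjectivity condition \eqref{41} as the essential hypothesis distinguishing this from a statement about merely related states.
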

	
	However, due to the possible non-determinism introduced by the matrices $G$ and $E$ in (\ref{1}), two systems of the form (\ref{1}) may have the same external behavior while not being bisimilar. This is already illustrated in \cite{Arjan04a} for the case $E=I$.

	\subsection{Bisimulation relation for deterministic case}
	In this section, we specialize the results to DAE systems {\it without} disturbances $d$.
	Consider two systems of the form
	\begin{equation}
	\label{43}
	\Sigma_{i}:\begin{array}{rll}
	E_{i}\dot{x}_{i} =& A_{i}x_{i}+B_{i}u_{i}, & \quad x_{i}\in\mathcal{X}_{i}, u_{i}\in\mathcal{U},\\[2mm]
	y_{i}=& C_{i}x_{i}, & \quad y_{i}\in\mathcal{Y}, i=1,2,
	\end{array}
	\end{equation}
	where $E_{i},A_{i}\in \mathbb{R}^{q_i\times n_i}$ and $B_{i}\in\mathbb{R}^{q_i\times m}, C_{i} \in \mathbb{R}^{ p \times n_i}$ for $i=1,2$. Theorem \ref{thm1} can be specialized as follows.
	\begin{corollary}\label{cor2}
		A subspace $\mathcal{R}\subset\mathcal{X}_{1}\times\mathcal{X}_{2}$ is a bisimulation relation between $\Sigma_{1}$ and $\Sigma_{2}$ given by (\ref{43}), satisfying $\pi_{i}(\mathcal{R})\subset\mathcal{V}_{i}^{*},i=1,2,$ if and only if
		\begin{equation}
		\label{44}
		\begin{array}{rllll}
		(a) &\mathcal{R}+\left[
		\begin{array}{cccc}
		\ker E_{1}\cap\mathcal{V}_{1}^{*}\\
		0
		\end{array}
		\right]=\mathcal{R}+\left[
		\begin{array}{cccc}
		0\\
		\ker E_{2}\cap\mathcal{V}_{2}^{*}
		\end{array}
		\right],\\[5mm]
		(b) & \left[
		\begin{array}{cccc}
		A_{1} & 0\\
		0 & A_{2}
		\end{array}
		\right]\mathcal{R}\subset\left[
		\begin{array}{cccc}
		E_{1} & 0\\
		0 & E_{2}
		\end{array}
		\right]\mathcal{R},\\[5mm]
		(c) & \Ima \left[
		\begin{array}{cccc}
		B_{1}\\
		B_{2}
		\end{array}
		\right]\subset\left[
		\begin{array}{cccc}
		E_{1} & 0\\
		0 & E_{2}
		\end{array}
		\right]\mathcal{R},\\[5mm]
		(d) & \mathcal{R} \subset \ker\left[
		C_{1}\vdots -C_{2}
		\right]	.
		\end{array}	
		\end{equation}
	\end{corollary}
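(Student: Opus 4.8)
The plan is to derive Corollary \ref{cor2} as a direct specialization of Theorem \ref{thm1}, obtained by regarding the deterministic systems \eqref{43} as instances of the general systems \eqref{22} in which the disturbance matrices $G_i$ are void. Concretely, a system of the form \eqref{43} is precisely a system of the form \eqref{1}/\eqref{22} with $s_i=0$ and $\mathcal{D}_i=\{0\}$, so that $G_i$ has no columns and $\Ima G_i=\{0\}$. Under this identification the bisimulation definition (Definition \ref{def1}) and its reformulation (Proposition \ref{prop1}) specialize to the deterministic setting in the obvious way: all quantifications over disturbance functions $d_i(\cdot)$, respectively vectors $d_i$, become vacuous, there being no disturbance left to choose. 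Note that non-determinism of the systems \eqref{43} may nevertheless persist through singularity of $E_i$, so the bisimulation requirement is not automatic.

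The only computation needed is to rewrite the terms in conditions (\ref{30}a)--(\ref{30}d) under the substitution $\Ima G_i=\{0\}$. First, since the set-theoretic inverse satisfies $E_i^{-1}(\Ima G_i)=E_i^{-1}(\{0\})=\ker E_i$, we get $E_i^{-1}(\Ima G_i)\cap\mathcal{V}_i^{*}=\ker E_i\cap\mathcal{V}_i^{*}$, which turns condition (\ref{30}a) into condition (\ref{44}a). Second, the image of the block-diagonal disturbance matrix with blocks $G_1,G_2$ collapses to $\{0\}$, so the corresponding summand disappears from conditions (\ref{30}b) and (\ref{30}c), yielding (\ref{44}b) and (\ref{44}c) respectively. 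Finally, condition (\ref{30}d) coincides verbatim with (\ref{44}d), as does the standing requirement $\pi_i(\mathcal{R})\subset\mathcal{V}_i^{*}$.

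Assembling these substitutions, Theorem \ref{thm1} applied to systems with void $G_i$ asserts exactly that $\mathcal{R}$ is a bisimulation relation between the deterministic systems \eqref{43} if and only if (\ref{44}a)--(\ref{44}d) hold, which is the claim. Since the argument is a term-by-term specialization of an already-established equivalence, I do not expect a genuine obstacle; the one point deserving care is the collapse $E_i^{-1}(\Ima G_i)=\ker E_i$, which is immediate once $\Ima G_i=\{0\}$, together with the observation that the reduction of Definition \ref{def1} to the $G_i$-free case is indeed the intended notion of bisimulation for deterministic DAE systems.
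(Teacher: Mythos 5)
Your proposal is correct and matches the paper's intent exactly: the paper states Corollary \ref{cor2} without separate proof, treating it as an immediate specialization of Theorem \ref{thm1} to systems with void $G_i$, which is precisely your term-by-term substitution $\Ima G_i = \{0\}$, $E_i^{-1}(\Ima G_i) = \ker E_i$. Your added remark that non-determinism can persist through singular $E_i$ (so the conditions are not vacuous) is a correct and worthwhile observation, but does not change the argument.
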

	Corollary \ref{cor2} can be applied to the following situation considered in \cite{Arjan04a}. Consider two linear systems given by
	\begin{equation}
	\label{45}
	\Sigma_{i}:\begin{array}{rcll}
	\dot{x}_{i} &=& A_{i}x_{i}+B_{i}u_{i}+G_{i}d_{i}, \\[2mm]
	y_{i} &=& C_{i}x_{i}.
	\end{array}
	\end{equation}
	By multiplying both sides of the first equation of (\ref{45}) by an annihilating matrix $G_{i}^{\perp}$ of maximal rank one obtains the equivalent system representation without disturbances
	\begin{equation}
	\label{46}
	\begin{array}{rlllll}
	G_{i}^{\perp}\dot{x}_{i}&=& G_{i}^{\perp}A_{i}x_{i}+G_{i}^{\perp}B_{i}u_{i},\\[2mm]
	y_{i} &=& C_{i}x_{i},
	\end{array}
	\end{equation}
	which is of the general form (\ref{43}); however satisfying the special property $\mathcal{V}_i^* = \mathcal{X}_{i}$. This implies that $\mathcal{R}$ is a bisimulation relation between $\Sigma_1$ and $\Sigma_2$ given by (\ref{45}) {\it if and only if} it is a bisimulation relation between $\Sigma_1$ and $\Sigma_2$ given by (\ref{46}), as can be seen as follows. 
	As already noted in Remark 2.6 a bisimulation relation between $\Sigma_{1}$ and $\Sigma_{2}$ as in (\ref{45}) is a subspace  $\mathcal{R}\subset\mathcal{X}_{1}\times\mathcal{X}_{2}$ satisfying \eqref{37}.
	Now let $\mathcal{R}$ satisfy (\ref{37}). We will show that it will satisfy (\ref{44}) for systems (\ref{46}). First, since $\mathcal{V}_i = \mathcal{X}_{i}$ and $\ker E_i = \ker G_i^{\perp} = \Ima G_i$ we see that (\ref{44}a) is satisfied. Furthermore, by pre-multiplying both sides of (\ref{37}b,c) with
	\begin{equation}
	\label{47}
	\left[
	\begin{array}{cccc}
	G_{1}^{\perp} & 0\\
	0 & G_{2}^{\perp}
	\end{array}
	\right],
	\end{equation}
	we obtain
	\begin{equation}
	\label{48}
	\begin{array}{rllll}
	\left[
	\begin{array}{ccccc}
	G_{1}^{\perp}A_{1} & 0\\
	0 & G_{1}^{\perp}A_{2}
	\end{array}
	\right] \mathcal{R} &\subset&\left[
	\begin{array}{ccccc}
	G_{1}^{\perp} & 0\\
	0 & G_{2}^{\perp}
	\end{array}
	\right]\mathcal{R},\\[5mm]
	\Ima\left[
	\begin{array}{ccccc}
	G_{1}^{\perp}B_{1}\\
	G_{2}^{\perp}B_{2}
	\end{array}
	\right]&\subset&\left[
	\begin{array}{ccccc}
	G_{1}^{\perp} & 0\\
	0 & G_{2}^{\perp}
	\end{array}		
	\right]\mathcal{R},
	\end{array}
	\end{equation}
	showing satisfaction of (\ref{44}b,c). 
	Conversely, let $\mathcal{R}$ be a bisimulation relation between $\Sigma_{1}$ and $\Sigma_{2}$ given by (\ref{46}), having consistent subsets $\mathcal{V}_{i}^{*}=\mathcal{X}_{i}, i=1,2.$ Then according to (\ref{44}) it is satisfying
	\begin{equation}
	\label{49}
	\begin{array}{rlllll}
	(a) & \mathcal{R}+\left[
	\begin{array}{cccc}
	\ker G_{1}^{\perp}\\
	0
	\end{array}
	\right]=\mathcal{R}+\left[
	\begin{array}{cccc}
	0\\
	\ker G_{2}^{\perp}
	\end{array}
	\right],\\[5mm]
	(b) & \left[
	\begin{array}{ccccc}
	G_{1}^{\perp}A_{1} & 0\\
	0 & G_{1}^{\perp}A_{2}
	\end{array}
	\right]\subset\left[
	\begin{array}{ccccc}
	G_{1}^{\perp} & 0\\
	0 & G_{2}^{\perp}
	\end{array}
	\right]\mathcal{R},\\[5mm]
	(c) & \Ima\left[
	\begin{array}{ccccc}
	G_{1}^{\perp}B_{1}\\
	G_{2}^{\perp}B_{2}
	\end{array}
	\right]\subset\left[
	\begin{array}{ccccc}
	G_{1}^{\perp} & 0\\
	0 & G_{2}^{\perp}
	\end{array}		
	\right]\mathcal{R},\\[5mm]
	(d) & \mathcal{R} \subset \ker\left[
	C_{1} \vdots -C_{2}
	\right].
	\end{array}
	\end{equation}
	Using again $\Ima G_{i}=\ker G_{i}^{\perp}$ it immediately follows that $\mathcal{R}$ is satisfying (\ref{37}), and thus is a bisimulation relation between the systems (\ref{45}).

	\section{Bisimulation relations for regular DAE systems}
	In this section we will specialize the notion of bisimulation relation for general DAE systems of the form (\ref{1}) to {\it regular} DAE systems. Regularity is usually defined for  DAE systems {\it without disturbances}
	\begin{equation}
	\label{50}
	\Sigma:\begin{array}{rllll}
	E\dot{x}&=& Ax +Bu, & x\in\mathcal{X}, u\in\mathcal{U}\\[2mm]
	y &=& Cx, & y\in\mathcal{Y},
	\end{array}
	\end{equation}
	Hence the consistent subset $\mathcal{V}^*$ is either empty or equal to the maximal subspace $\mathcal{V} \subset \mathcal{X}$ satisfying $A\mathcal{V} \subset E\mathcal{V}, \Ima B \subset E\mathcal{V}$. 
	\begin{definition}
		The matrix pencil $sE-A$ is called {\it regular} if the polynomial $\det(sE-A)$ in $s \in \mathbb{C}$ is {\it not} identically zero.
		The corresponding DAE system (\ref{50}) is called regular whenever the pencil $sE-A$ is regular.  
	\end{definition}
	Define additionally $\mathcal{V}_0^*$ as the maximal subspace $\mathcal{V} \subset \mathcal{X}$ satisfying $A\mathcal{V} \subset E\mathcal{V}$. (Note that {\it if} there exists a subspace $\mathcal{V}$ satisfying $A\mathcal{V} \subset E\mathcal{V}, \Ima B \subset E\mathcal{V}$ then $\mathcal{V}_0^* =\mathcal{V}^*$.) Then \cite{Arm84}
	\begin{theorem}
		Consider (\ref{50}). The following statements are equivalent :
		\begin{enumerate}
			\item $sE-A$ is a regular pencil,
			\item $\mathcal{V}_0^{*}\cap \ker E=0$.
		\end{enumerate}
	\end{theorem}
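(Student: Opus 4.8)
The plan is to pass between the algebraic condition ``$\det(sE-A)\equiv 0$'' and the geometric condition ``$\mathcal{V}_0^{*}\cap\ker E\neq 0$'' through the existence of a \emph{polynomial null vector} of the pencil. Throughout I assume the pencil is square, $E,A\in\mathbb{R}^{n\times n}$, since otherwise $\det(sE-A)$ is undefined. The one standard fact I would invoke is that, viewing $sE-A$ as a matrix over the field $\mathbb{R}(s)$, one has $\det(sE-A)\equiv 0$ if and only if its columns are linearly dependent over $\mathbb{R}(s)$, i.e. if and only if there exists a nonzero polynomial vector $p(s)=\sum_{i=0}^{d}p_{i}s^{i}\in\mathbb{R}[s]^{n}$ with $p_{d}\neq 0$ and $(sE-A)p(s)=0$ (after clearing denominators). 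I would then establish the two implications of the stated equivalence, (1)$\Leftrightarrow$(2), in contrapositive form for the first and directly for the second.

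For $\neg(1)\Rightarrow\neg(2)$ (a singular pencil forces a nonzero intersection) I would take such a $p(s)$ and compare coefficients in $(sE-A)p(s)=0$. Matching powers of $s$ yields $Ap_{0}=0$, the recursion $Ap_{k}=Ep_{k-1}$ for $1\le k\le d$, and $Ep_{d}=0$. Setting $\mathcal{P}:=\spa\{p_{0},\dots,p_{d}\}$, these relations show $A\mathcal{P}\subset E\mathcal{P}$, so by maximality of $\mathcal{V}_{0}^{*}$ we get $\mathcal{P}\subset\mathcal{V}_{0}^{*}$. Since $p_{d}\in\ker E$ and $p_{d}\neq 0$, this yields $0\neq p_{d}\in\mathcal{V}_{0}^{*}\cap\ker E$, which is exactly $\neg(2)$.

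For $\neg(2)\Rightarrow\neg(1)$ I would use a pure dimension count, avoiding any polynomial construction. Because $A\mathcal{V}_{0}^{*}\subset E\mathcal{V}_{0}^{*}$, both $E$ and $A$ map $\mathcal{V}_{0}^{*}$ into $E\mathcal{V}_{0}^{*}$, so for every $\lambda\in\mathbb{R}$ the restriction $(\lambda E-A)|_{\mathcal{V}_{0}^{*}}$ is a linear map $\mathcal{V}_{0}^{*}\to E\mathcal{V}_{0}^{*}$. If $\mathcal{V}_{0}^{*}\cap\ker E\neq 0$ then, by rank--nullity, $\dim E\mathcal{V}_{0}^{*}=\dim\mathcal{V}_{0}^{*}-\dim(\mathcal{V}_{0}^{*}\cap\ker E)<\dim\mathcal{V}_{0}^{*}$, so this map drops dimension and hence has a nonzero kernel vector $w_{\lambda}$. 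Thus $(\lambda E-A)w_{\lambda}=0$ with $w_{\lambda}\neq 0$ for every $\lambda$, so $\det(\lambda E-A)=0$ for all real $\lambda$; a polynomial in $s$ with infinitely many roots vanishes identically, giving $\det(sE-A)\equiv 0$, i.e. $\neg(1)$.

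The routine parts are the coefficient matching and the rank--nullity bookkeeping. The one conceptual step that carries the proof---and the place I would be most careful---is recognizing that the span of the coefficients of a polynomial null vector is automatically $(E,A)$-invariant (satisfies $A\mathcal{P}\subset E\mathcal{P}$) and therefore sits inside $\mathcal{V}_{0}^{*}$; this is the bridge that converts the field-theoretic statement $\det\equiv 0$ into the subspace statement about $\mathcal{V}_{0}^{*}\cap\ker E$. I would also verify that the square-pencil hypothesis is genuinely used---it is needed both for $\det$ to make sense and for ``nonzero kernel for all $\lambda$'' to force singularity of the full map---and that the argument is insensitive to working over $\mathbb{R}$ versus $\mathbb{C}$, since the null vectors produced are real.
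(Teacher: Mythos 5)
Your proof is correct, and the first thing to note is that the paper offers no proof of this theorem at all: the equivalence is simply quoted from Armentano (1984), i.e.\ \cite{Arm84}, so your argument is a self-contained replacement for a citation rather than an alternative to an in-paper argument. Both of your directions are sound. In the singular-pencil direction, the coefficient matching $Ap_0=0$, $Ap_k=Ep_{k-1}$ for $1\le k\le d$, $Ep_d=0$ does give $A\mathcal{P}\subset E\mathcal{P}$ for $\mathcal{P}=\spa\{p_0,\dots,p_d\}$, and the leading coefficient $p_d\neq 0$ lands in $\mathcal{V}_0^{*}\cap\ker E$; the one step you should make explicit is why ``maximality'' yields $\mathcal{P}\subset\mathcal{V}_0^{*}$: the family of subspaces satisfying $A\mathcal{V}\subset E\mathcal{V}$ is closed under addition, so $\mathcal{V}_0^{*}$ is in fact the \emph{maximum} of this family (it contains every member), not merely a maximal element --- this is exactly what licenses the inclusion. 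In the converse direction, the rank--nullity count --- $\lambda E-A$ maps $\mathcal{V}_0^{*}$ into $E\mathcal{V}_0^{*}$, whose dimension is strictly smaller whenever $\mathcal{V}_0^{*}\cap\ker E\neq 0$, so $\det(\lambda E-A)=0$ for every $\lambda$ and hence identically --- is airtight, and correctly uses that a nonzero kernel vector of the restriction is a nonzero kernel vector of the full $n\times n$ matrix. Your explicit flagging of the squareness hypothesis $q=n$ is also well taken: the paper's general setup allows rectangular $E,A\in\mathbb{R}^{q\times n}$, and regularity only makes sense in the square case, a point the paper leaves implicit. Compared with the arguments usually given for this equivalence in the DAE literature (via the Weierstrass/Kronecker canonical form, or via the recursive subspace algorithm computing $\mathcal{V}_0^{*}$), your route rests on two elementary observations --- the span of the coefficients of a polynomial null vector is $(E,A)$-invariant, and invariance together with a nontrivial intersection with $\ker E$ kills the determinant at every $\lambda$ --- which makes it arguably the cleaner proof and requires no canonical-form machinery.
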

	Regularity thus means uniqueness of solutions from any initial condition in the consistent subset $\mathcal{V}^*$ of (\ref{50}). We immediately obtain the following consequence of Corollary \ref{cor2}.
	\begin{corollary}\label{cor4}
		A subspace $\mathcal{R}\subset\mathcal{X}_{1}\times\mathcal{X}_{2}$ is a bisimulation relation between $\Sigma_{1}$ and $\Sigma_{2}$ satisfying $\pi_{i}(\mathcal{R})\subset\mathcal{V}_{i}^{*}, i=1,2$, if and only if
		\begin{equation}
		\label{51}
		\begin{array}{rllllllll}
		(a) & \left[
		\begin{array}{cccc}
		A_{1} & 0\\
		0 & A_{2}
		\end{array}
		\right]\mathcal{R}\subset\left[
		\begin{array}{cccc}
		E_{1} & 0\\
		0 & E_{2}
		\end{array}
		\right]\mathcal{R},\\[5mm]
		(b) & \Ima \left[
		\begin{array}{cccc}
		B_{1}\\
		B_{2}
		\end{array}
		\right]\subset\left[
		\begin{array}{cccc}
		E_{1} & 0\\
		0 & E_{2}
		\end{array}
		\right]\mathcal{R},\\[5mm]
		(c) & \mathcal{R}\subset\ker\left[
		C_{1}\vdots -C_{2}
		\right].
		\end{array}
		\end{equation}
	\end{corollary}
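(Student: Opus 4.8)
The plan is to obtain Corollary \ref{cor4} directly from Corollary \ref{cor2} by showing that, under the regularity hypothesis on both pencils $sE_i - A_i$, condition (\ref{44}a) becomes vacuous. Once that is established, the only surviving requirements from (\ref{44}) are (\ref{44}b), (\ref{44}c), (\ref{44}d), which are literally (\ref{51}a), (\ref{51}b), (\ref{51}c). So the whole argument reduces to verifying that the two subspaces appended to $\mathcal{R}$ in (\ref{44}a), namely $\ker E_i \cap \mathcal{V}_i^*$ for $i=1,2$, are both zero.

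To see this, first I would invoke the equivalence theorem stated just above the corollary: regularity of $sE_i - A_i$ is equivalent to $\mathcal{V}_{0,i}^* \cap \ker E_i = \{0\}$, where $\mathcal{V}_{0,i}^*$ denotes the maximal subspace satisfying $A_i \mathcal{V} \subset E_i \mathcal{V}$. Next I would observe that the consistent subspace satisfies $\mathcal{V}_i^* \subset \mathcal{V}_{0,i}^*$: indeed, $\mathcal{V}_i^*$ obeys \emph{both} $A_i \mathcal{V}_i^* \subset E_i \mathcal{V}_i^*$ and $\Ima B_i \subset E_i \mathcal{V}_i^*$, hence in particular the first inclusion, so by maximality of $\mathcal{V}_{0,i}^*$ it is contained in $\mathcal{V}_{0,i}^*$. (In the present setting $\mathcal{V}_i^*$ is a genuine subspace, since $\pi_i(\mathcal{R}) \subset \mathcal{V}_i^*$ is assumed; as noted in the text, nonemptiness of $\mathcal{V}_i^*$ even gives $\mathcal{V}_i^* = \mathcal{V}_{0,i}^*$.) Combining these facts yields $\ker E_i \cap \mathcal{V}_i^* \subset \ker E_i \cap \mathcal{V}_{0,i}^* = \{0\}$ for $i=1,2$.

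With $\ker E_i \cap \mathcal{V}_i^* = \{0\}$ established, both appended subspaces in (\ref{44}a) collapse to $\{0\}$, so that condition reads $\mathcal{R} = \mathcal{R}$ and holds automatically. The equivalence claimed in Corollary \ref{cor4} then follows by reading off (\ref{44}b,c,d) as (\ref{51}a,b,c). I do not expect a genuine obstacle in this argument; the only point that needs a moment's care is the containment $\mathcal{V}_i^* \subset \mathcal{V}_{0,i}^*$, which is immediate from the maximality in the two definitions together with the fact that the defining condition for $\mathcal{V}_{0,i}^*$ is the weaker one. If anything, the remark that $\mathcal{V}_i^* = \mathcal{V}_{0,i}^*$ whenever $\mathcal{V}_i^*$ is nonempty streamlines the computation of $\ker E_i \cap \mathcal{V}_i^*$ further.
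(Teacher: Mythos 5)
Your proposal is correct and is essentially the paper's own argument: the paper obtains Corollary \ref{cor4} as an immediate consequence of Corollary \ref{cor2}, using the preceding theorem that regularity of $sE_i-A_i$ is equivalent to $\mathcal{V}_{0,i}^{*}\cap\ker E_i=\{0\}$ to make condition (\ref{44}a) vacuous, exactly as you do. Your write-up simply makes explicit the inclusion $\mathcal{V}_i^{*}\subset\mathcal{V}_{0,i}^{*}$ (indeed equality when $\mathcal{V}_i^{*}$ is nonempty, as the paper notes parenthetically), which the paper leaves to the reader.
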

	In the regular case, the existence of a bisimulation relation can be characterized in terms of {\it transfer matrices}.
	\begin{theorem}\label{thm3}
		Let $\mathcal{R}$ be a bisimulation relation between regular systems $\Sigma_{1}$ and $\Sigma_{2}$ given in (\ref{43}), then their transfer matrices $G_{i}(s):=C_{i}(sE_{i}-A_{i})^{-1}B_{i}$ for $i=1,2$ are equal.
	\end{theorem}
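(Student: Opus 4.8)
The plan is to transport the purely linear-algebraic conditions on $\mathcal{R}$ into the field $\mathbb{R}(s)$ of rational functions and read off the transfer-matrix identity there. Writing $E^{\times},A^{\times},C^{\times}$ as in (\ref{38}) (with the $G_i$ absent), I note that since $\mathcal{R}$ is a bisimulation relation between the regular systems, Corollary \ref{cor4} supplies
\begin{equation*}
A^{\times}\mathcal{R}\subset E^{\times}\mathcal{R},\qquad \Ima\begin{bmatrix}B_1\\B_2\end{bmatrix}\subset E^{\times}\mathcal{R},\qquad \mathcal{R}\subset\ker C^{\times}.
\end{equation*}
Regularity of each pencil makes $sE_i-A_i$ invertible over $\mathbb{R}(s)$, so the $G_i(s)=C_i(sE_i-A_i)^{-1}B_i$ are well defined and $sE^{\times}-A^{\times}$ is invertible over $\mathbb{R}(s)$. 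I would then set $W(s):=(sE^{\times}-A^{\times})^{-1}\begin{bmatrix}B_1\\B_2\end{bmatrix}$, whose block rows are $(sE_i-A_i)^{-1}B_i$, so that $C^{\times}W(s)=G_1(s)-G_2(s)$. Since $\mathcal{R}\subset\ker C^{\times}$ extends to $\mathcal{R}(s):=\mathcal{R}\otimes_{\mathbb{R}}\mathbb{R}(s)\subset\ker C^{\times}$ over $\mathbb{R}(s)$, the whole theorem reduces to showing that every column of $W(s)$ lies in $\mathcal{R}(s)$.

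The crux is therefore $\Ima W(s)\subset\mathcal{R}(s)$. First I would use $A^{\times}\mathcal{R}\subset E^{\times}\mathcal{R}$ to see, by $\mathbb{R}(s)$-linearity, that the restriction $\Phi:=(sE^{\times}-A^{\times})\vert_{\mathcal{R}(s)}$ maps $\mathcal{R}(s)$ into $(E^{\times}\mathcal{R})(s)=E^{\times}\mathcal{R}(s)$: for $r\in\mathcal{R}$ one has $sE^{\times}r\in E^{\times}\mathcal{R}(s)$ and $A^{\times}r\in E^{\times}\mathcal{R}$. Because $sE^{\times}-A^{\times}$ is invertible over $\mathbb{R}(s)$, the map $\Phi$ is injective, so $\dim_{\mathbb{R}(s)}\mathcal{R}(s)\le\dim_{\mathbb{R}(s)}E^{\times}\mathcal{R}(s)$; but $\dim_{\mathbb{R}(s)}\mathcal{R}(s)=\dim_{\mathbb{R}}\mathcal{R}$ and $\dim_{\mathbb{R}(s)}E^{\times}\mathcal{R}(s)=\dim_{\mathbb{R}}E^{\times}\mathcal{R}\le\dim_{\mathbb{R}}\mathcal{R}$, so equality holds throughout and $\Phi$ is a bijection of $\mathcal{R}(s)$ onto $E^{\times}\mathcal{R}(s)$.

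Now condition $\Ima\begin{bmatrix}B_1\\B_2\end{bmatrix}\subset E^{\times}\mathcal{R}$ places each column of $\begin{bmatrix}B_1\\B_2\end{bmatrix}$ in $E^{\times}\mathcal{R}\subset E^{\times}\mathcal{R}(s)=\Ima\Phi$, so the equation $(sE^{\times}-A^{\times})w=\begin{bmatrix}B_1\\B_2\end{bmatrix}e_j$ admits a solution lying in $\mathcal{R}(s)$. Over $\mathbb{R}(s)$ this equation has a \emph{unique} solution, namely the $j$-th column of $W(s)$; hence that column lies in $\mathcal{R}(s)$. Letting $j$ range gives $\Ima W(s)\subset\mathcal{R}(s)$, and then $\mathcal{R}(s)\subset\ker C^{\times}$ forces $G_1(s)-G_2(s)=C^{\times}W(s)=0$, which is the claim.

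I expect the main obstacle to be precisely the middle step: certifying that the (unique) resolvent solution $W(s)$ actually lies inside the extended relation $\mathcal{R}(s)$. The conditions of Corollary \ref{cor4} only guarantee, loosely, that $sE^{\times}-A^{\times}$ carries $\mathcal{R}(s)$ into $E^{\times}\mathcal{R}(s)$ and that the columns of $\begin{bmatrix}B_1\\B_2\end{bmatrix}$ land in $E^{\times}\mathcal{R}$; the passage from ``some solution exists in $\mathcal{R}(s)$'' to ``the unique resolvent solution lies in $\mathcal{R}(s)$'' is exactly what regularity provides, via the injectivity of $sE^{\times}-A^{\times}$ over $\mathbb{R}(s)$ together with the dimension count that upgrades $\Phi$ to a bijection onto $E^{\times}\mathcal{R}(s)$. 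The remaining steps are routine bookkeeping with the extension of scalars from $\mathbb{R}$ to $\mathbb{R}(s)$.
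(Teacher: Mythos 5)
Your proof is correct, but it takes a genuinely different route from the paper's. The paper argues in the time/Laplace domain: by (\ref{51}a,b), from any $(x_1,x_2)\in\mathcal{R}$ and input $u$ the two systems admit jointly paired dynamics \eqref{52} whose solutions stay in $\mathcal{R}$; taking the Laplace transform of \eqref{52} (implicitly from the zero initial condition, and with the input factor $U(s)$ suppressed) gives $X_i(s)=(sE_i-A_i)^{-1}B_iU(s)$, and then (\ref{51}c) applied along the paired trajectories yields $C_1X_1(s)=C_2X_2(s)$, i.e.\ equality of the transfer matrices. You instead stay entirely inside linear algebra over the field $\mathbb{R}(s)$: extension of scalars, injectivity of $sE^{\times}-A^{\times}$, and a dimension count showing that $(sE^{\times}-A^{\times})\vert_{\mathcal{R}(s)}$ is a bijection onto $E^{\times}\mathcal{R}(s)$, whence the columns of $W(s)=(sE^{\times}-A^{\times})^{-1}\bigl[\begin{smallmatrix}B_1\\B_2\end{smallmatrix}\bigr]$ lie in $\mathcal{R}(s)\subset\ker C^{\times}$. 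What your approach buys: it is self-contained and avoids the analytic ingredients the paper leans on (existence of DAE solutions remaining in $\mathcal{R}$, well-definedness of Laplace transforms), and your bijectivity/dimension argument supplies exactly the justification that the paper's step from \eqref{52} to \eqref{53} leaves implicit --- namely why the \emph{unique} frequency-domain solution is compatible with $\mathcal{R}$. What the paper's approach buys: brevity, and transparency of the system-theoretic content (paired trajectories produce equal outputs). Both proofs use the same hypotheses from Corollary \ref{cor4} and neither needs condition (\ref{44}a), so your argument is a valid, and arguably tighter, substitute.
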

	\begin{proof}
		Let $\mathcal{R}$ be a bisimulation relation between $\Sigma_{1}$ and $\Sigma_{2}$ thus it is satisfying (\ref{51}). According to (\ref{51}a) and (\ref{51}b), for $(x_{1},x_{2})\in\mathcal{R}$ and $u\in\mathcal{U}$, there exist $(\dot{x}_{1},\dot{x}_{2})\in\mathcal{R}$ such that
		\begin{equation}
		\label{52}
		\left[
		\begin{array}{cccc}
		E_{1} & 0\\
		0 & E_{2}
		\end{array}
		\right]\left[
		\begin{array}{cccc}
		\dot{x}_{1}\\
		\dot{x}_{2}
		\end{array}
		\right]=\left[
		\begin{array}{cccc}
		A_{1} & 0\\
		0 & A_{2}
		\end{array}
		\right]\left[
		\begin{array}{cccc}
		x_{1}\\
		x_{2}
		\end{array}
		\right]+\left[
		\begin{array}{cccc}
		B_{1}\\
		B_{2}
		\end{array}
		\right]u.
		\end{equation}
		Taking the \textit{Laplace} transform of (\ref{52}), we have
		\begin{equation}
		\label{53}
		\left[
		\begin{array}{cccc}
		X_{1}(s)\\
		X_{2}(s)
		\end{array}
		\right]=\left[
		\begin{array}{cccc}
		(sE_{1}-A_{1})^{-1}B_{1}\\
		(sE_{2}-A_{2})^{-1}B_{2}
		\end{array}
		\right].
		\end{equation}
		Since (\ref{51}c) holds and taking \textit{Laplace} tranform, we have
		\begin{equation}
		\label{54}
		C_{1}(sE_{1}-A_{1})^{-1}B_{1}=C_{2}(sE_{2}-A_{2})^{-1}B_{2}.
		\end{equation}
	\end{proof}
	The converse statement holds provided the matrices $E_i$ are {\it invertible}. 
	\begin{theorem}\label{thm4}
		Assume $E_i, i=1,2,$ is invertible. Then there exists a bisimulation relation $\mathcal{R}$ between $\Sigma_{1}$ and $\Sigma_{2}$ if and only if their transfer matrices $G_{i}(s):=C_{i}(sE_{i}-A_{i})^{-1}B_{i}$ for $i=1,2$ are equal.
	\end{theorem}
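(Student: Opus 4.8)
The plan is to prove the nontrivial ("if") direction: assuming $G_1(s) = G_2(s)$ with both $E_i$ invertible, construct an explicit bisimulation relation $\mathcal{R}$. The key observation is that when $E_i$ is invertible, the system $E_i\dot{x}_i = A_ix_i + B_iu_i$ is equivalent to the ordinary (non-DAE) linear system $\dot{x}_i = E_i^{-1}A_ix_i + E_i^{-1}B_iu_i$, with $\mathcal{V}_i^* = \mathcal{X}_i$ and $\ker E_i = 0$. In this setting conditions (\ref{51}a,b,c) of Corollary \ref{cor4} simplify, after premultiplying by $E^\times{}^{-1}$, to the standard controlled-invariance conditions for ordinary systems: $\mathcal{R}$ must be invariant under $\mathrm{diag}(E_1^{-1}A_1, E_2^{-1}A_2)$, contain the image of $\begin{bmatrix} E_1^{-1}B_1 \\ E_2^{-1}B_2\end{bmatrix}$, and lie in $\ker C^\times$. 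Thus the problem reduces to the known result \cite{Arjan04a} that two ordinary linear systems with equal transfer matrices admit a bisimulation relation.

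First I would set $\hat{A}_i := E_i^{-1}A_i$, $\hat{B}_i := E_i^{-1}B_i$, and verify that $G_i(s) = C_i(sE_i - A_i)^{-1}B_i = C_i(sI - \hat{A}_i)^{-1}\hat{B}_i$, so equality of the DAE transfer matrices is exactly equality of the transfer matrices of the ordinary systems $(\hat{A}_i, \hat{B}_i, C_i)$. Then I would invoke the converse half of the corresponding theorem for ordinary systems from \cite{Arjan04a}: equal transfer matrices guarantee the existence of a bisimulation relation $\mathcal{R} \subset \mathcal{X}_1 \times \mathcal{X}_2$ for the ordinary pair, i.e.\ a subspace satisfying $\mathrm{diag}(\hat{A}_1,\hat{A}_2)\mathcal{R} \subset \mathcal{R}$, $\Ima\begin{bmatrix}\hat{B}_1\\\hat{B}_2\end{bmatrix}\subset\mathcal{R}$, and $\mathcal{R}\subset\ker C^\times$.

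Next I would translate these three inclusions back into conditions (\ref{51}a,b,c). Premultiplying the invariance inclusion $\mathrm{diag}(\hat{A}_1,\hat{A}_2)\mathcal{R}\subset\mathcal{R}$ by $E^\times = \mathrm{diag}(E_1,E_2)$ recovers (\ref{51}a), since $E^\times\mathrm{diag}(\hat{A}_1,\hat{A}_2) = A^\times$ and $E^\times\mathcal{R}$ appears on the right; similarly applying $E^\times$ to $\Ima\begin{bmatrix}\hat{B}_1\\\hat{B}_2\end{bmatrix}\subset\mathcal{R}$ gives $\Ima\begin{bmatrix}B_1\\B_2\end{bmatrix}\subset E^\times\mathcal{R}$, which is (\ref{51}b); and (\ref{51}c) is unchanged. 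Since $\mathcal{V}_i^* = \mathcal{X}_i$, the projection condition $\pi_i(\mathcal{R})\subset\mathcal{V}_i^*$ is automatic. By Corollary \ref{cor4}, $\mathcal{R}$ is a bisimulation relation between $\Sigma_1$ and $\Sigma_2$, completing the proof.

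**The main obstacle** I anticipate is not conceptual but bookkeeping: one must be careful that invertibility of $E_i$ is genuinely used to justify both the premultiplication step (so that $E^\times\mathcal{R} = \mathcal{R}$ is replaced correctly and no information is lost) and the reduction to an \emph{ordinary} system, where $\mathcal{V}_i^* = \mathcal{X}_i$ and the subtle consistent-subset conditions of the general theorem collapse. The genuine content is entirely imported from \cite{Arjan04a}; the hard part is only verifying that invertibility makes the DAE conditions of Corollary \ref{cor4} equivalent, term by term, to the ordinary-system conditions, with the transfer-matrix identity serving as the bridge.
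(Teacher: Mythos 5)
Your proposal is correct and takes essentially the same route as the paper: the paper proves the ``if'' direction by exhibiting exactly the relation your appeal to \cite{Arjan04a} would produce, namely $\mathcal{R}$ in \eqref{55}, the image of the joint reachability matrix of the transformed ordinary systems $(E_i^{-1}A_i,\,E_i^{-1}B_i,\,C_i)$, and checking conditions \eqref{51}, which is the same premultiplication-by-$E^{\times}$ bookkeeping you describe. As in the paper, the ``only if'' direction is not reproved but is supplied by Theorem~\ref{thm3} (applicable since invertibility of $E_i$ implies regularity); it would be worth stating this explicitly rather than just labelling that direction trivial.
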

	\begin{proof}
		Let $G_{1}(s)=G_{2}(s)$. Then 
		\begin{equation}
		\label{55}
		\mathcal{R}:= \Ima \left[
		\begin{array}{cccccccccccc}
		E_{1}^{-1}B_{1} & E_{1}^{-1}A_{1}E_{1}^{-1}B_{1} & (E_{1}^{-1}A_{1})^{2}E_{1}^{-1}B_{1}&\cdots\\
		E_{2}^{-1}B_{2} & E_{2}^{-1}A_{1}E_{2}^{-1}B_{2} & (E_{2}^{-1}A_{2})^{2}E_{2}^{-1}B_{2}&\cdots\\
		\end{array}
		\right]
		\end{equation}
		satisfies (\ref{51}).
	\end{proof}
	
	The following example shows that Theorem \ref{thm4} does not hold if $E_{i}$ is not invertible.
	
	\begin{example}
		Consider two systems, given by
		\[
		\begin{array}{rllll}
		&\Sigma_{1} : \begin{array}{rlllll}
		\left[
		\begin{array}{cccc}
		1 & 0\\
		0 & 0
		\end{array}
		\right]\dot{x}_{1}&=&\left[
		\begin{array}{cccc}
		1 & 0\\
		0 & 1
		\end{array}
		\right]x_{1}+\left[
		\begin{array}{cccc}
		0\\
		1
		\end{array}
		\right]u_{1},\\[3mm]
		y_{1}&=&\left[
		\begin{array}{ccc}
		1 & 1
		\end{array}
		\right]x_{1},
		\end{array}\\\\
		&\Sigma_{2} :
		\begin{array}{rllll}
		\left[
		\begin{array}{cccc}
		0 & 0\\
		0 & 1
		\end{array}
		\right]\dot{x}_{2}&=&\left[
		\begin{array}{cccc}
		1 & 0\\
		0 & 1
		\end{array}
		\right]x_{2}+\left[
		\begin{array}{cccc}
		1\\
		0
		\end{array}
		\right]u_{2},\\[3mm]
		y_{2}&=&\left[
		\begin{array}{cccc}
		1 & 1
		\end{array}
		\right]x_{2}.
		\end{array}
		\end{array}\]
		System $\Sigma_{1}$ and $\Sigma_{2}$ are regular and their transfer matrices are equal. However, there does not exist any bisimulation relation $\mathcal{R}$ satisfying (\ref{51}), since in fact the consistent subsets for both system are empty.
	\end{example}
	
	\section{Simulation relations and abstractions}
	In this section we will define a one-sided version of the notion of bisimulation relation and bisimilarity.
	\begin{definition}
		A subspace 
		\begin{equation}
		\label{56}
		\mathcal{S}\subset\mathcal{X}_{1}\times\mathcal{X}_{2},
		\end{equation}
		with $\pi_{i}(\mathcal{S})\subset \mathcal{V}_{i}^{*}$, for i=1,2, is a \textit{simulation relation} of $\Sigma_{1}$ by $\Sigma_{2}$ with consistent subsets $\mathcal{V}_{i}^{*}, i=1,2$ if and only if for all pairs of initial conditions $(x_{1},x_{2})\in\mathcal{S}$ and any joint input function $u_{1}(\cdot)=u_{2}(\cdot)=u(\cdot)\in\mathfrak{U}
		$ the following properties hold:
		\begin{enumerate}
			\item for every disturbance function $d_{1}(\cdot) \in \mathfrak{D}_1$ for which there exists a solution $x_{1}(\cdot)$ of $\Sigma_1$ (with $x_1(0)=x_{1}$), there exists a disturbance function $d_{2}(\cdot) \in \mathfrak{D}_2$ such that the resulting solution trajectory $x_{2}(\cdot)$ of $\Sigma_2$ (with $x_2(0)=x_{2}$) satisfies for all $t\geq 0$
			\begin{equation}
			\label{57}
			(x_{1}(t),x_{2}(t))\in \mathcal{S},
			\end{equation}
			\item 
			\begin{equation}
			\label{58}
			C_{1}x_{1}=C_{2}x_{2},\quad \mbox{ for all } (x_1,x_2) \in \mathcal{S}.
			\end{equation}
			
		\end{enumerate}
		$\Sigma_{1}$ is {\it simulated} by $\Sigma_{2}$ if the simulation relation $\mathcal{S}$ satisfies $\pi_{1}(S)=\mathcal{V}_{1}^{*}$.
	\end{definition}
	
	The one-sided version of Theorem \ref{thm1} is given as follows.
	\begin{proposition}
		A subspace $\mathcal{S}\subset\mathcal{X}_{1}\times\mathcal{X}_{2}$ is a simulation relation of $\Sigma_{1}$ by $\Sigma_{2}$ satisfying $\pi_{i}(\mathcal{S})\subset\mathcal{V}_{i}^{*},$ for $i=1,2$ if and only if
		\begin{equation}
		\label{59}
		\begin{array}{rlllll}
		(a) & \mathcal{S}+\left[
		\begin{array}{cccc}
		E_{1}^{-1}(\Ima G_{1})\cap\mathcal{V}_{1}^{*}\\
		0
		\end{array}
		\right]\subset\mathcal{S}+\left[
		\begin{array}{cccc}
		0\\
		E_{2}^{-1}(\Ima G_{2})\cap\mathcal{V}_{2}^{*}
		\end{array}
		\right],\\[5mm]
		(b) & \left[
		\begin{array}{cccc}
		A_{1} & 0\\
		0 & A_{2}
		\end{array}
		\right]\mathcal{S}\subset\left[
		\begin{array}{cccc}
		E_{1} & 0\\
		0 & E_{2}
		\end{array}
		\right]\mathcal{S}+\Ima\left[
		\begin{array}{cccc}
		G_{1}& 0\\
		0 & G_{2}
		\end{array}
		\right],\\[5mm]
		(c) & \Ima \left[
		\begin{array}{cccc}
		B_{1}\\
		B_{2}
		\end{array}
		\right]\subset\left[
		\begin{array}{cccc}
		E_{1} & 0\\
		0 & E_{2}
		\end{array}
		\right]\mathcal{S}+\Ima\left[
		\begin{array}{cccc}
		G_{1}& 0\\
		0 & G_{2}
		\end{array}
		\right],\\[5mm]
		(d) & \mathcal{S} \subset \ker\left[
		C_{1}\vdots -C_{2}
		\right].
		\end{array}			
		\end{equation}
	\end{proposition}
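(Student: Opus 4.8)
The plan is to mirror the two-step argument behind Theorem \ref{thm1}, first passing from the trajectory-based Definition to an infinitesimal (pointwise) condition, and then translating that condition into \eqref{59}, all the while tracking which two-sided equalities of the bisimulation case degrade into one-sided inclusions here. The first step is to record the one-sided analogue of Proposition \ref{prop1}: the subspace $\mathcal{S}$ is a simulation relation of $\Sigma_1$ by $\Sigma_2$ if and only if for all $(x_1,x_2)\in\mathcal{S}$ and all $u\in\mathcal{U}$ one has $C_1x_1=C_2x_2$ together with the \emph{single} implication that for every $d_1\in\mathcal{D}_1$ admitting $f_1\in\mathcal{V}_1^*$ with $E_1f_1=A_1x_1+B_1u+G_1d_1$, there exist $d_2\in\mathcal{D}_2$ and $f_2\in\mathcal{V}_2^*$ with $E_2f_2=A_2x_2+B_2u+G_2d_2$ and $(f_1,f_2)\in\mathcal{S}$. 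Its proof is verbatim that of Proposition \ref{prop1}, using the disturbance-elimination \eqref{6} and Theorem \ref{thm5} to move between trajectories and their initial derivatives, and Proposition 2.9 of \cite{Arjan04a} applied to the ordinary systems \eqref{28}--\eqref{29}; one simply discards the converse half of each bi-implication.

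For the \textbf{If part}, I would assume \eqref{59} and verify this pointwise condition. The output equality is immediate from (\ref{59}d). For the dynamic part I would copy the construction in the If part of Theorem \ref{thm1}: given $(x_1,x_2)\in\mathcal{S}$ and $u$, conditions (\ref{59}b,c) produce $(f_1,f_2)\in\mathcal{S}$ and disturbances realizing the joint equation \eqref{31} (which also yields $\pi_i(\mathcal{S})\subset\mathcal{V}_i^*$ as in Theorem \ref{thm1}); then an arbitrary admissible $f_1'=f_1+v_1$ with $v_1\in E_1^{-1}(\Ima G_1)\cap\mathcal{V}_1^*$ is matched, \emph{using only the single inclusion} (\ref{59}a), by some $v_2\in E_2^{-1}(\Ima G_2)\cap\mathcal{V}_2^*$ and $(f_1'',f_2'')\in\mathcal{S}$, giving $f_2':=f_2+f_2''$ with $(f_1',f_2')\in\mathcal{S}$ and $E_2f_2'=A_2x_2+B_2u+G_2d_2'$. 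The point to stress is that, unlike Theorem \ref{thm1}, only the forward matching of $\Sigma_1$ by $\Sigma_2$ is demanded, so only the inclusion direction of (\ref{59}a) is consumed and no reverse matching is needed.

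For the \textbf{Only if part}, I would derive (\ref{59}b,c,d) exactly as \eqref{32}, \eqref{33}, and (\ref{30}d) were obtained, since these steps are insensitive to one- versus two-sidedness: (\ref{59}d) restates \eqref{58}, while (\ref{59}b,c) follow from $\pi_i(\mathcal{S})\subset\mathcal{V}_i^*$ applied with $u=0$ and with $x_1=x_2=0$, respectively. For (\ref{59}a) I would specialize the pointwise condition to $(x_1,x_2)=(0,0)$ and $u=0$: every $f_1$ with $E_1f_1=G_1d_1$ is matched by some $f_2$ with $E_2f_2=G_2d_2$ and $(f_1,f_2)\in\mathcal{S}$, so that $\begin{bmatrix}f_1\\0\end{bmatrix}=\begin{bmatrix}f_1\\f_2\end{bmatrix}-\begin{bmatrix}0\\f_2\end{bmatrix}$ lies in $\mathcal{S}+\bigl(\{0\}\times(E_2^{-1}(\Ima G_2)\cap\mathcal{V}_2^*)\bigr)$, which gives \eqref{35} and hence the single inclusion (\ref{59}a) after adding $\mathcal{S}$ to both sides.

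The one genuine point of the proof, and the only place where it diverges from Theorem \ref{thm1}, is precisely this last step: because the simulation relation omits the reverse matching of $\Sigma_2$ by $\Sigma_1$, the companion inclusion \eqref{36} is \emph{not} available, so the equality (\ref{30}a) correctly relaxes to the inclusion (\ref{59}a). I therefore expect the main (and essentially only) obstacle to be bookkeeping: making sure that each bi-implication in the bisimulation argument is replaced by exactly its forward half, so that the symmetric conditions (\ref{59}b,c,d) survive unchanged while the asymmetric condition (\ref{59}a) weakens in the correct direction.
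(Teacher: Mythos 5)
Your proposal is correct and takes the approach the paper intends: the paper states this proposition without proof, as the one-sided version of Theorem \ref{thm1}, and your argument is precisely the one-sided adaptation of the proofs of Proposition \ref{prop1} and Theorem \ref{thm1} (pointwise reformulation via Theorem \ref{thm5} and the reduction to ordinary systems, then the linear-algebraic translation). You also correctly isolate the only substantive change: without backward matching the inclusion \eqref{36} is unavailable, so the equality (\ref{30}a) relaxes to the one-sided inclusion (\ref{59}a), while conditions (\ref{59}b,c,d) and their derivations carry over unchanged.
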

	
	The maximal simulation relation $\mathcal{S}^{max}$ can be computed by the following simplified version of Algorithm \ref{alg1}.
	\begin{algorithm}
		Given two dynamical systems $\Sigma_{1}$ and $\Sigma_{2}$. Define the following sequence $\mathcal{S}^{j}, j=0,1,2,...$, of subsets of $\mathcal{X}_{1}\times\mathcal{X}_{2}$
		\begin{equation}
		\label{60}
		\begin{array}{rlllllll}
		\mathcal{S}^{0} &= \mathcal{X}_{1}\times\mathcal{X}_{2},\\
		\mathcal{S}^{1} &= \{z\in\mathcal{S}^{0}|z\in\ker C^{\times},\mathcal{S}^{1}+\mathcal{G}_{1}^{\times}\subset\mathcal{S}^{1}+\mathcal{G}_{2}^{\times}\}\\[5mm]
		\mathcal{S}^{2} &= \{z\in\mathcal{S}^{1}|A^{\times}z+\subset E^{\times}\mathcal{S}^{1}+\Ima \bar{G}^{\times},\mathcal{S}^{2}+\mathcal{G}_{1}^{\times}\subset\mathcal{S}^{2}+\mathcal{G}_{2}^{\times}\},\\
		&\vdots\\
		\mathcal{S}^{j} &= \{z\in\mathcal{S}^{j-1}|A^{\times}z+\subset E^{\times}\mathcal{S}^{j-1}+\Ima \bar{G}^{\times},\mathcal{S}^{j}+\mathcal{G}_{1}^{\times}\subset\mathcal{S}^{j}+\mathcal{G}_{2}^{\times}\}.
		\end{array}
		\end{equation}
	\end{algorithm}
	
	Recall the definition of the inverse relation $\mathcal{T}^{-1}:=\{(x_{a}, x_{b})\mid (x_{b},x_{a})\in\mathcal{T}\}$. We have the following
	\begin{proposition}
		Let $\mathcal{S}\subset \mathcal{X}_{1}\times\mathcal{X}_{2}$ be a simulation relation of $\Sigma_{1}$ by $\Sigma_{2}$ and let $\mathcal{T}\subset\mathcal{X}_{2}\times\mathcal{X}_{1}$ be a simulation relation of $\Sigma_{2}$ by $\Sigma_{1}$. Then $\mathcal{R}:=\mathcal{S}+\mathcal{T}^{-1}$ is a bisimulation relation between $\Sigma_{1}$ and $\Sigma_{2}$.
	\end{proposition}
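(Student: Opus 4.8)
The plan is to verify that $\mathcal{R}:=\mathcal{S}+\mathcal{T}^{-1}$ satisfies the four conditions (\ref{30}a)--(\ref{30}d) of Theorem \ref{thm1}, feeding in the linear-algebraic characterization (\ref{59}) of a simulation relation, applied once to $\mathcal{S}$ and once to $\mathcal{T}$. Writing $\mathcal{K}_i:=E_i^{-1}(\Ima G_i)\cap\mathcal{V}_i^*$, we have in the notation of (\ref{38}) that $\mathcal{G}_1^\times=\mathcal{K}_1\times\{0\}$ and $\mathcal{G}_2^\times=\{0\}\times\mathcal{K}_2$. Since $\pi_1(\mathcal{T}^{-1})=\pi_2(\mathcal{T})\subset\mathcal{V}_1^*$ and $\pi_2(\mathcal{T}^{-1})=\pi_1(\mathcal{T})\subset\mathcal{V}_2^*$, while $\pi_i(\mathcal{S})\subset\mathcal{V}_i^*$, the requirement $\pi_i(\mathcal{R})\subset\mathcal{V}_i^*$ is immediate.

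First I would translate the conditions (\ref{59}) for $\mathcal{T}$, which live in $\mathcal{X}_2\times\mathcal{X}_1$, into statements about $\mathcal{T}^{-1}\subset\mathcal{X}_1\times\mathcal{X}_2$. Introducing the coordinate-exchange isomorphism $J$ (acting on both the state and the equation spaces), so that $\mathcal{T}^{-1}=J(\mathcal{T})$, and using that $J$ intertwines the relevant block-diagonal operators ($A^\times J=J\,\mathrm{diag}(A_2,A_1)$, $E^\times J=J\,\mathrm{diag}(E_2,E_1)$), carries $\Ima\,\mathrm{diag}(G_2,G_1)$ onto $\Ima\bar G^\times$, maps $\ker\,[\,C_2\vdots-C_1]$ into $\ker C^\times$, and sends the two building blocks $\mathcal{K}_2\times\{0\}$ and $\{0\}\times\mathcal{K}_1$ of the $\mathcal{T}$-version of (\ref{59}a) onto $\mathcal{G}_2^\times$ and $\mathcal{G}_1^\times$ respectively, I expect the image of (\ref{59}) to read
\begin{equation}
\label{plan1}
A^\times\mathcal{T}^{-1}\subset E^\times\mathcal{T}^{-1}+\Ima\bar G^\times,\quad \mathcal{T}^{-1}\subset\ker C^\times,\quad \mathcal{T}^{-1}+\mathcal{G}_2^\times\subset\mathcal{T}^{-1}+\mathcal{G}_1^\times.
\end{equation}
The essential feature to record is that the last inclusion points \emph{opposite} to the one for $\mathcal{S}$, namely $\mathcal{S}+\mathcal{G}_1^\times\subset\mathcal{S}+\mathcal{G}_2^\times$ from (\ref{59}a).

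Conditions (\ref{30}b)--(\ref{30}d) should then fall out by simply adding subspaces and using monotonicity. For (\ref{30}d) both $\mathcal{S}$ and $\mathcal{T}^{-1}$ lie in $\ker C^\times$, hence so does their sum. For (\ref{30}b) I would split $A^\times\mathcal{R}=A^\times\mathcal{S}+A^\times\mathcal{T}^{-1}$ and bound each summand by $E^\times\mathcal{R}+\Ima\bar G^\times$, using (\ref{59}b) for $\mathcal{S}$ and (\ref{plan1}) for $\mathcal{T}^{-1}$ together with $E^\times\mathcal{S},E^\times\mathcal{T}^{-1}\subset E^\times\mathcal{R}$. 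Condition (\ref{30}c) needs only $\mathcal{S}$: by (\ref{59}c) already $\Ima\begin{bmatrix}B_1\\B_2\end{bmatrix}\subset E^\times\mathcal{S}+\Ima\bar G^\times\subset E^\times\mathcal{R}+\Ima\bar G^\times$.

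The step I expect to carry the real content is (\ref{30}a), where an equality must be produced out of two one-sided simulation conditions. Here the plan is to exploit precisely the two \emph{opposite} inclusions: from (\ref{59}a) for $\mathcal{S}$, $\mathcal{R}+\mathcal{G}_1^\times=(\mathcal{S}+\mathcal{G}_1^\times)+\mathcal{T}^{-1}\subset(\mathcal{S}+\mathcal{G}_2^\times)+\mathcal{T}^{-1}=\mathcal{R}+\mathcal{G}_2^\times$, and from the reversed inclusion in (\ref{plan1}), $\mathcal{R}+\mathcal{G}_2^\times=\mathcal{S}+(\mathcal{T}^{-1}+\mathcal{G}_2^\times)\subset\mathcal{S}+(\mathcal{T}^{-1}+\mathcal{G}_1^\times)=\mathcal{R}+\mathcal{G}_1^\times$; together these give the required equality. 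The only genuine obstacle is the careful bookkeeping of the exchange $J$ in the translation step---once it is seen that the inclusion in the $\mathcal{T}$-version of (\ref{59}a) \emph{reverses} upon passing to $\mathcal{T}^{-1}$, the equality (\ref{30}a) is automatic, and Theorem \ref{thm1} then certifies that $\mathcal{R}$ is a bisimulation relation between $\Sigma_1$ and $\Sigma_2$.
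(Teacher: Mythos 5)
Your proposal is correct and takes essentially the same approach as the paper: the paper's proof likewise verifies conditions (\ref{30}a)--(\ref{30}d) of Theorem \ref{thm1} for $\mathcal{R}=\mathcal{S}+\mathcal{T}^{-1}$ directly from the characterization (\ref{59}), applied once to $\mathcal{S}$ and once (with indices 1 and 2 interchanged) to $\mathcal{T}$. The paper states this in two lines without detail; your filling-in of the key step---that the oppositely directed inclusions (\ref{59}a) for $\mathcal{S}$ and for $\mathcal{T}^{-1}$ together force the equality (\ref{30}a)---is exactly the content the paper leaves implicit.
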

	\begin{proof}
		Let $\mathcal{S}$ satisfy (\ref{59}) and let $\mathcal{T}$ satisfy (\ref{59}) with index 1 replaced by 2. Define $\mathcal{R}=\mathcal{S}+\mathcal{T}^{-1}$, then we have properties (\ref{30}a). Similarly, $\mathcal{R}$ satisfies (\ref{30}b,c,d).
	\end{proof}
	\begin{proposition}
		Suppose there exists a simulation of $\Sigma_{1}$ by $\Sigma_{2}$, and a simulation of $\Sigma_{2}$ by $\Sigma_{1}$. Let $\mathcal{S}^{max}\subset\mathcal{X}_{1}\times\mathcal{X}_{2}$ denote the maximal simulation relation of $\Sigma_{1}$ by $\Sigma_{2}$, and $\mathcal{T}^{max}\subset\mathcal{X}_{2}\times\mathcal{X}_{1}$ the maximal simulation relation of $\Sigma_{2}$ by $\Sigma_{1}$. Then $\mathcal{S}^{max}=(\mathcal{T}^{max})^{-1}=\mathcal{R}^{max}$, with $\mathcal{R}^{max}$ the maximal bisimulation relation.
	\end{proposition}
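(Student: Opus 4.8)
The plan is to prove both equalities by a pair of opposite inclusions, combining the preceding Proposition with the observation that a bisimulation relation is simultaneously a simulation in \emph{both} directions. Throughout I write $\mathcal{T}^{-1}$ for the coordinate-swapped relation, and I use that inversion is a linear operation on subspaces which respects sums and inclusions.

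First I would record the decisive fact: every bisimulation relation $\mathcal{R}$ between $\Sigma_1$ and $\Sigma_2$ is a simulation of $\Sigma_1$ by $\Sigma_2$, while its inverse $\mathcal{R}^{-1}$ is a simulation of $\Sigma_2$ by $\Sigma_1$. Indeed, conditions (\ref{59}) differ from (\ref{30}) only in item (a): the \emph{equality} in (\ref{30}a) trivially implies the one-sided inclusion demanded by (\ref{59}a), so $\mathcal{R}$ itself satisfies (\ref{59}). For the inverse, apply the swap $(x_1,x_2)\mapsto(x_2,x_1)$ to (\ref{30}). Because $E^{\times},A^{\times},\bar{G}^{\times}$ are block-diagonal and $C^{\times}=\left[C_1\vdots -C_2\right]$ splits accordingly, conditions (\ref{30}b), (\ref{30}c), (\ref{30}d) transform exactly into conditions (b), (c), (d) of (\ref{59}) with the indices $1$ and $2$ interchanged; and the equality (\ref{30}a), upon inversion, yields the corresponding equality for the reversed pair (the two disturbance subspaces being interchanged), which in particular gives the inclusion required by (\ref{59}a) for the simulation of $\Sigma_2$ by $\Sigma_1$. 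Hence $\mathcal{R}^{-1}$ is such a simulation.

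Applying this with $\mathcal{R}=\mathcal{R}^{max}$ and invoking maximality of $\mathcal{S}^{max}$ and $\mathcal{T}^{max}$ gives the first pair of inclusions
\[ \mathcal{R}^{max}\subset\mathcal{S}^{max},\qquad (\mathcal{R}^{max})^{-1}\subset\mathcal{T}^{max}, \]
the second being equivalent to $\mathcal{R}^{max}\subset(\mathcal{T}^{max})^{-1}$. For the reverse inclusions I would feed the maximal simulations into the preceding Proposition: since $\mathcal{S}^{max}$ is a simulation of $\Sigma_1$ by $\Sigma_2$ and $\mathcal{T}^{max}$ a simulation of $\Sigma_2$ by $\Sigma_1$, the subspace $\mathcal{S}^{max}+(\mathcal{T}^{max})^{-1}$ is a bisimulation relation, whence by maximality of $\mathcal{R}^{max}$
\[ \mathcal{S}^{max}+(\mathcal{T}^{max})^{-1}\subset\mathcal{R}^{max}. \]
This forces $\mathcal{S}^{max}\subset\mathcal{R}^{max}$ and $(\mathcal{T}^{max})^{-1}\subset\mathcal{R}^{max}$. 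Combining with the first pair yields $\mathcal{S}^{max}=\mathcal{R}^{max}$ and $(\mathcal{T}^{max})^{-1}=\mathcal{R}^{max}$, as required; the standing hypothesis that both simulations exist guarantees that $\mathcal{S}^{max}$ and $\mathcal{T}^{max}$, hence $\mathcal{R}^{max}$, are non-empty.

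The step needing the most care is the verification that $\mathcal{R}^{-1}$ is genuinely a simulation of $\Sigma_2$ by $\Sigma_1$, and specifically that (\ref{59}a) survives inversion. This is exactly where the \emph{equality} in (\ref{30}a), rather than a mere one-sided inclusion, is indispensable: a one-directional simulation would produce, after the coordinate swap, the \emph{opposite} inclusion to the one needed, and the argument would collapse. Everything else is the routine check that the block-diagonal operators $E^{\times},A^{\times},\bar{G}^{\times}$ and the output map $C^{\times}$ transform covariantly under the swap, making conditions (b)--(d) manifestly symmetric.
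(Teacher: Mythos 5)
Your proof is correct and is essentially the argument the paper invokes when it defers to \cite[Proposition 5.4]{Arjan04a}: a bisimulation relation satisfies the one-sided conditions in both directions (the equality in condition (a) surviving the coordinate swap), so $\mathcal{R}^{max}\subset\mathcal{S}^{max}$ and $\mathcal{R}^{max}\subset(\mathcal{T}^{max})^{-1}$, while the preceding proposition makes $\mathcal{S}^{max}+(\mathcal{T}^{max})^{-1}$ a bisimulation relation, yielding the reverse inclusions by maximality. Your version simply writes out explicitly what the paper leaves as a citation, so there is nothing to correct.
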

	\begin{proof}
		Analogous to the proof of \cite[Proposition 5.4]{Arjan04a}.
	\end{proof}
	
	Simulation relations appear naturally in the context of {\it abstractions}; see e.g. \cite{Pap03}. Consider the DAE system 
	\begin{equation}
	\label{61}
	\Sigma: \begin{array}{rlllll}
	E\dot{x}&=& Ax+Bu+Gd, & x\in\mathcal{X}, u\in\mathcal{U}, d\in\mathcal{D}, \\[2mm]
	y &=& Cx, & y\in\mathcal{Y},
	\end{array}
	\end{equation}
	together with a surjective linear map $H:\mathcal{X}\rightarrow\mathcal{Z}$, $\mathcal{Z}$ being another linear space, satisfying $\ker H \subset \ker C$.
	This implies that there exist a unique linear map $\bar{C}:\mathcal{Z}\rightarrow\mathcal{Y}$ such that
	\begin{equation}
	\label{62}
	C=\bar{C}H.
	\end{equation}
	Then define the following dynamical system on $\mathcal{Z}$
	\begin{equation}
	\label{63}
	\Sigma :\begin{array}{rllll}
	\bar{E}\dot{z}&=& \bar{A}z+\bar{B}u+\bar{G}d, & z\in\mathcal{Z}, u\in\mathcal{U}, d\in\mathcal{D}, \\[2mm]
	y&=&\bar{C}z, & y\in\mathcal{Y}
	\end{array}
	\end{equation}
	where $H^{+}$ denotes the Moore-Penrose pseudo-inverse of $H$, $\bar{E}:=EH^{+},\bar{A}:= AH^{+},\bar{B}:= B,$ and 
	\[
	\bar{G}:= \left[
	G\vdots E(\ker H)\vdots A(\ker H)
	\right],
	\]			
	is an \textit{abstraction} of $\Sigma$ in the sense that we factor out the part of the state variables $x\in\mathcal{X}$ corresponding to ker $H$. Since $H^{+}z=x+\ker H$, it can be easily proved that $\mathcal{S}:=\{(x,z)\mid z=Hx\}$ is a simulation relation of $\Sigma$ by $\bar{\Sigma}$.  
	


\section{Conclusions}
In this paper we have defined and studied by methods from geometric control theory the notion of bisimulation relation for general linear differential-algebraic systems, including the special case of DAE systems with regular matrix pencil. Also the one-sided notion of simulation relation related to abstraction has been provided. Avenues for further research include the use of bisimulation relations for model reduction, the consideration of switched DAE systems, as well as the generalization to nonlinear DAE systems.


\section*{Acknowledgment}
The work of the first author is supported by the Directorate General of Resources for Science, Technology and Higher Education, The Ministry of Research, Technology, and Higher Education of Indonesia.


\end{document}